\numberwithin{equation}{section}
\DeclareMathAlphabet{\cat}{OT1}{cmss}{m}{sl}
\newtheorem{theorem}[equation]{Theorem}
\newtheorem{proposition}[equation]{Proposition}
\newtheorem{lemma}[equation]{Lemma}
\newtheorem{corollary}[equation]{Corollary}
\newtheorem{ques}[equation]{Question}
\newtheorem*{imp*}{Important}
\theoremstyle{definition}
\newtheorem{remark}[equation]{Remark}
\newtheorem{example}[equation]{Example}
\newtheorem{dfn}[equation]{Definition}
\renewcommand{\(}{\bigl(}
\renewcommand{\)}{\bigr)}
\newcommand{\tens}{\otimes}
\newcommand{\iso}{\stackrel{\sim}{\to}}
\newcommand{\sep}{\mathrm{sep}}
\newcommand{\id}{\mathrm{id}}
\renewcommand{\Im}{\operatorname{Im}}
\newcommand{\Ker}{\operatorname{Ker}}
\newcommand{\ch}{\operatorname{char}}
\newcommand{\Inv}{\operatorname{Inv}}
\newcommand{\Aut}{\operatorname{Aut}}
\newcommand{\res}{\operatorname{res}}
\newcommand{\Br}{\operatorname{Br}}
\newcommand{\Spec}{\operatorname{Spec}}
\newcommand{\SL}{\operatorname{SL}}
\newcommand{\GL}{\operatorname{GL}}
\newcommand{\gSL}{\operatorname{\mathbf{SL}}}
\newcommand{\gSU}{\operatorname{\mathbf{SU}}}
\newcommand{\gSpin}{\operatorname{\mathbf{Spin}}}
\newcommand{\Hom}{\operatorname{Hom}}
\newcommand{\spp}{\operatorname{span}}
\newcommand{\Dyn}{\operatorname{Dyn}}
\newcommand{\Nrd}{\operatorname{Nrd}}
\newcommand{\cone}{\operatorname{cone}}
\newcommand{\fppf}{\operatorname{fppf}}
\def\et{\operatorname{\acute et}}
\newcommand{\xra}{\xrightarrow}
\newcommand{\A}{\mathbb{A}}
\newcommand{\Z}{\mathbb{Z}}
\newcommand{\Q}{\mathbb{Q}}
\newcommand{\R}{\mathbb{R}}
\newcommand{\gm}{\mathbb{G}_m}
\newcommand{\cA}{\mathcal A}
\newcommand{\cG}{\mathcal G}
\DeclareMathOperator{\car}{char}
\newcommand{\injects}{\hookrightarrow}
\newcommand{\qform}[1]{{\left\langle{#1}\right\rangle}}                   % a quadratic form
\newcommand{\F}{\mathbb{F}}
\DeclareMathOperator{\Spin}{Spin}
\DeclareMathOperator{\SO}{SO}
\DeclareMathOperator{\Sp}{Sp}
\DeclareMathOperator{\PGL}{PGL}
\newcommand{\ot}{\otimes}
\newcommand{\nBr}[1]{{{}_{{#1}}\!\Br}}
\newcommand{\tInv}{\widetilde\Inv}
\newcommand{\hInv}{\Inv_h}
\newcommand{\thInv}{\tInv_h}
\newcommand{\tiL}{\stackrel{L}\tens}
\newcommand{\Gm}{\gm}
\title
[Rost invariant on the center, revisited] % colontitle
{Rost invariant on the center, revisited}
\author
{Skip Garibaldi}
\author
{Alexander S. Merkurjev}
\thanks{The work of the second author has been supported by the
NSF grant DMS \#1160206.}
\begin{document}

\begin{abstract}
The Rost invariant of the Galois cohomology of a simple simply connected algebraic group over a field $F$ is defined regardless of the characteristic of $F$, but unfortunately some formulas for it are only known with some hypothesis on the the characteristic.  We improve those formulas by (1) removing the hypothesis on the characteristic and (2) removing an ad hoc pairing that appears in the original formulas.  As a preliminary step of independent interest, we also extend the classification of invariants of quasi-trivial tori to all fields.
\end{abstract}

\subjclass[2010]{20G15 (Primary); 11E72 (Secondary)}

\maketitle

%%%%%%%%%%%%%%%%%%%%%%%%%%%%%%%%%%%%%%%%%%%%%%%%%%%%%%%%%%%%%%%%
\section{Introduction}

Cohomological invariants provide an important tool to distinguish elements of Galois cohomology groups such as $H^1(F, G)$ where $G$ is a semisimple algebraic group.  In case $G$ is simple and simply connected there are no non-constant invariants with values in $H^d(*, \Q/\Z(d-1))$ for $d < 3$.  For $d = 3$, modulo constants the group of invariants $H^1(*, G) \to H^3(*, \Q/\Z(2))$ is finite cyclic with a canonical generator known as the Rost invariant and denoted by $r_G$; this was shown by Markus Rost in the 1990s and full details can be found in \cite{GMS}.  Rost's theorem raised the questions: How to calculate the Rost invariant of a class in $H^1(F, G)$?  What is a formula for it?

At least for $G$ of inner type $\cat{A}_n$ there is an obvious candidate for $r_G$, which is certainly equal to $mr_G$ for some $m$ relatively prime to $n+1$.  The papers \cite{MPT02} and \cite{GQ07} studied the composition
\begin{equation} \label{rost.ctr}
H^1(F, C) \to H^1(F, G) \xrightarrow{r_G} H^3(F, \Q/\Z(2))
\end{equation}
for $C$ the center of $G$, and under some assumptions on $\car F$, computed the composition in terms of the value of $m$ for type $\cat{A}$.  Eventually the value of $m$ was determined in \cite{GQ11}.  The main result is Theorem \ref{main}, which gives a formula for \eqref{rost.ctr} which does not depend on the type of $G$ nor on $\car F$.  This improves on the results of \cite{MPT02} and \cite{GQ07} by removing the hypothesis on the characteristic and avoiding the ad hoc type-by-type arguments used in those papers.  We do rely on \cite{GQ11} for the computation of $m$ for type $\cat{A}$, but nothing more.

The strategy is to (1) extend the determination of invariants of quasi-trivial tori from \cite{MPT02} to all fields (see Theorem \ref{invariants}), (2) to follow the general outline of \cite{GQ07} to reduce to the case of type $\cat{A}$, and (3) to avoid the ad hoc formulas used in previous work by giving a formula independent of the Killing-Cartan type of $G$. 

Specifically, there is a canonically defined element $t_G^\circ \in H^2(F, C^\circ)$, where $C^\circ$ denotes the dual multiplicative group scheme of $C$ in a sense defined below, and a natural cup product $H^1(F, C) \otimes H^2(F, C^\circ) \to H^3(F, \Q/\Z(2))$.  We prove:
\begin{theorem}\label{main}
Let $G$ be a semisimple and simply connected algebraic group over a field $F$, $C\subset G$ the center of $G$.
Let $t_G^\circ$ be the image of the Tits class $t_G$ under ${\hat\rho}^*: H^2(F,C)\to H^2(F,C^\circ)$.
Then the diagram
\[
\xymatrix{
H^1(F,C) \ar[rd]_{-t_G^\circ\cup}\ar[r]^{i^*} & H^1(F,G) \ar[d]^{r_G} \\
%H^3(F, \Q/\Z(2)) \ar[r]^{\theta^*}
 & H^3(F, \Q/\Z(2))
 }
\]
commutes.
\end{theorem}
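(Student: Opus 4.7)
The plan is to realize both sides of the asserted identity as degree-$3$ cohomological invariants of the commutative group scheme $C$ and then to show that they coincide. Both $\alpha \mapsto r_G(i^*(\alpha))$ and $\alpha \mapsto -t_G^\circ\cup\alpha$ are natural transformations $H^1(-,C) \to H^3(-,\Q/\Z(2))$ on the category of fields containing $F$, so their difference $\phi$ is such an invariant and the problem reduces to proving $\phi = 0$.

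First I would control the group of invariants of $C$ by resolving $C$ by quasi-trivial tori. A surjection from a permutation Galois lattice onto the character module $\hat C$ yields an exact sequence
\[
1 \to C \to P \to Q \to 1
\]
with $P$ a quasi-trivial torus. The connecting map $H^1(-,C) \to H^2(-,P)$, combined with Theorem \ref{invariants}---whose extension to arbitrary characteristic is a key ingredient here---reduces the identity $\phi = 0$ to verifying it after pullback along the characters $\chi \colon C \to \Gm$; equivalently, it suffices to push $\phi$ forward along each $\chi$ into $H^3(-, \Q/\Z(2))$ via $H^1(-, \mu_n)$, where $n$ is the order of $\chi$.

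Second I would handle the character-wise identity by reducing to type $\cat{A}$, following the strategy of \cite{GQ07}. Each $\chi$ gives rise, via the Tits construction, to a morphism $\rho_\chi \colon G \to \gSL_1(A_\chi)$ whose restriction to $C$ is $\chi$ followed by the standard inclusion $\mu_n \hookrightarrow \gSL_1(A_\chi)$, and under the induced map $H^2(F,C) \to H^2(F,\mu_n) \hookrightarrow \Br(F)$ the Tits class satisfies $\chi(t_G) = [A_\chi]$. Functoriality of the Rost invariant across $\rho_\chi$, together with the type-$\cat{A}$ formula $r_{\gSL_1(A_\chi)}(\beta) = -\beta\cup[A_\chi]$ for $\beta \in H^1(F,\mu_n)$---which we take from \cite{GQ11}---evaluates the pushforward of $r_G \circ i^*$. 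Matching this against the pushforward of $-t_G^\circ\cup(-)$, using the compatibility between $\hat\rho$ and the characters of $C$, yields $\phi = 0$ after pushforward along every $\chi$, hence $\phi = 0$ by the first step.

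The principal obstacle is step one: making the resolution-and-classification argument work uniformly in all characteristics---in particular handling the factor $\mu_p \subset C$ when $\operatorname{char} F = p$---is exactly what Theorem \ref{invariants} is designed to address. A secondary technical point is bundling the Tits-algebra data $[A_\chi]$ arising from each individual character $\chi$ into the single cup-product against $t_G^\circ \in H^2(F, C^\circ)$; once the duality between $C$ and $C^\circ$ and the pairing $H^1(F,C)\otimes H^2(F,C^\circ)\to H^3(F,\Q/\Z(2))$ are set up carefully, this is a matter of functoriality rather than computation.
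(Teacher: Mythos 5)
Your step 1 is close in spirit to the paper's Proposition \ref{invh1}, but as written it has two gaps: (i) the classification of invariants requires the difference $\phi$ to be a \emph{group homomorphism} at each $K$ and to vanish over $F_\sep$; the additivity of $r_G\circ i^*$ is a genuine input (cited in the paper from \cite{MPT02} or \cite{G:rinv}), not a formal consequence of naturality, and (ii) a resolution $1\to C\to P\to Q\to 1$ with only $P$ quasi-trivial is not enough --- the argument needs \emph{both} terms quasi-trivial (the paper uses $T^*=\Lambda_w$, $S^*=\Lambda_r$, which are permutation modules precisely because Galois permutes fundamental weights and simple roots), since Theorem \ref{invariants} is applied to the quotient torus as well. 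Moreover, the conclusion of this step is that $\phi$ is cup product with a unique $h\in H^2(F,C^\circ)$; your claim that it "reduces to verifying after pullback along characters $\chi\!:C\to\Gm$" conflates characters acting on the $H^1(-,C)$ variable with characters of $C^\circ$ used to detect $h$ (as in Lemma \ref{inj}), and you do not explain how evaluating $\phi$ recovers $\chi_*(h)$.

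The decisive gap is step 2. The Tits construction does give $\rho_\chi\!:G\to\gSL_1(A_\chi)$ restricting to $\chi$ on $C$, and $\chi(t_G)=[A_\chi]$, but functoriality of the Rost invariant along $\rho_\chi$ yields $r_{\gSL_1(A_\chi)}\circ(\rho_\chi)_* = n_{\rho_\chi}\cdot r_G$, where $n_{\rho_\chi}$ is the Dynkin index of the representation. For the minuscule representations attached to the nontrivial characters of the center in types $\cat{D}_n$, $\cat{E}_6$, $\cat{E}_7$ this index is larger than $1$ and shares factors with the exponent of $C$ (e.g.\ index $12$ for the $56$-dimensional representation of $\cat{E}_7$, while $C=\mu_2$), so the type-$\cat{A}$ formula only pins down $n_{\rho_\chi}\cdot r_G(i^*x)$, which can be identically zero; no formula for $r_G\circ i^*$ itself follows. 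This is exactly why the paper does not use representations into type $\cat{A}$ but instead passes to $F(X)$ for $X$ a suitable partial flag variety (where $H^2(F,C)\to H^2(F(X),C)$ is injective by the Tits-algebra argument and Lemma \ref{inj}), takes a simply connected \emph{subgroup} $G'\subset G$ whose components have type $\cat{A}$, and proves the compatibilities of Tits classes (Lemma \ref{11}), of $\hat\rho$ (Lemma \ref{2}), and of Rost invariants (Lemma \ref{3}); there the multipliers are the square-lengths $d_i\in\{1,2,3\}$ and they cancel exactly against the factor $d_i$ appearing in Lemma \ref{2}. Your proposal has no analogue of this cancellation, so the reduction to type $\cat{A}$ does not go through as stated.
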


This result then gives a general statement for all invariants $H^1(*, G) \to H^3(*, \Q/\Z(2))$, which we state precisely in Theorem \ref{main2} below.

%%%%%%%%%%%%%%%%%%%%%%%%%%%%%%%%%%%%%%%%%%%%%%%%%%%%%%%%%%%%%%%%

\section{Cohomology of groups of multiplicative type} \label{cohomo}

Let $F$ be a field and $M$ a group scheme of multiplicative type over $F$. Then $M$ is uniquely determined by the Galois
module $M^*$ of characters over $F_\sep$. In particular, we have
\[
M(F_\sep)=\Hom(M^*, F_\sep^\times).
\]
If $M$ is a torus $T$, then $T^*$ is a Galois lattice and we set $T_*=\Hom(T^*,\Z)$. We have
\begin{equation}\label{cochar}
T(F_\sep)=T_*\tens F_\sep^\times.
\end{equation}

If $M$ is a finite group scheme $C$ of multiplicative type, we set $C_*:=\Hom(C^*,\Q/\Z)$, so we have a perfect pairing of Galois modules
\begin{equation}\label{pairing}
C_*\tens C^* \to \Q/\Z.
\end{equation}
Write $C^\circ$ for the group of multiplicative type over $F$ with the character module $C_*$. We call $C^\circ$ the group
\emph{dual to $C$}.

\begin{example}
We write $\mu_n$ for the sub-group-scheme of $\Gm$ of $n$-th roots of unity.  The restriction of the natural generator of $\Gm^*$ (the identity $\Gm \to \Gm$) generates $\mu_n^*$ and thereby identifies $\mu_n^*$ with $\Z/n\Z$.  From which $\mu_n^* = \Z/n\Z$ via the pairing \eqref{pairing}, hence $\mu_n^\circ=\mu_n$.
\end{example}

The change-of-sites map $\alpha: \Spec(F)_{\fppf}\to \Spec(F)_{\et}$ yields a functor
\[
\alpha_*:\operatorname{Sh}_{\fppf}(F)\to \operatorname{Sh}_{\et}(F)
\]
between the categories of sheaves over $F$ and
an exact functor
\[
R\alpha_*:\cat{D}^+\operatorname{Sh}_{\fppf}(F)\to \cat{D}^+\operatorname{Sh}_{\et}(F)
\]
between derived categories.

Every group $M$ of multiplicative type can be viewed as a sheaf of abelian groups either in the \'etale or fppf topology.
We have $\alpha_*(M)=M$ for every group $M$ of multiplicative type. If $M$ is smooth, we have
$R^i\alpha_*(M)=0$ for $i>0$ by \cite[Proof of Theorem 3.9]{Milne80}. It follows that $R\alpha_*(M)=M$, hence
\[
H^i_{\et}(F, M)=H^i_{\et}(F, R\alpha_*(M))=H^i_{\fppf}(F, M) \quad \text{for $M$ smooth}.
\]

If
\[
1\to C \to T \to S \to 1
\]
is an exact sequence of algebraic groups with $C$ a finite group of multiplicative type and $T$ and $S$ tori, this sequence
is exact in the fppf-topology but not in the \'etale topology (unless $C$ is smooth). Applying $R\alpha_*$ to the exact triangle
\[
C \to T \to S \to C[1]
\]
in $\cat{D}^+\operatorname{Sh}_{\fppf}(F)$, we get an exact triangle
\[
R\alpha_*(C) \to T(F_\sep) \to S(F_\sep) \to R\alpha_*(C)[1]
\]
in $\cat{D}^+\operatorname{Sh}_{\et}(F)$ since $R\alpha_*(T)=T(F_\sep)$ and the same for $S$. In other words,
\begin{equation}\label{cone}
R\alpha_*(C)=\cone(T(F_\sep)\to S(F_\sep))[-1].
\end{equation}

Recall that $\Z(1)$ is the complex in $\cat{D}^+\operatorname{Sh}_{\et}(F)$ with only one nonzero term $F^\times_\sep$ placed in degree $1$, i.e.,
$\Z(1)=F^\times_\sep[-1]$. Set
\[
C_*(1):=C_*\tiL \Z(1),\quad C^*(1):=C^*\tiL \Z(1),
\]
where the derived tensor product is taken in the derived category $\cat{D}^+\operatorname{Sh}_{\et}(F)$.
If $T$ is an algebraic torus, we write
\[
T_*(1):=T_*\tiL \Z(1)=T_* \tens \Z(1)=T(F_\sep)[-1].
\]

Tensoring the exact sequence
\[
0\to T_* \to S_* \to C_* \to 0
\]
with $\Z(1)$ and using \eqref{cochar}, we get an exact triangle
\[
C_*(1) \to T(F_\sep) \to S(F_\sep) \to C_*(1)[1].
\]
It follows from \eqref{cone} that
\[
C_*(1)=R\alpha_*(C)
\]
and therefore,
\[
H^i_{\fppf}(F,C)=H^i_{\et}(F,R\alpha_*(C))=H^{i}_{\et}(F,C_*(1)).
\]
Recall that we also have
\[
H^i_{\fppf}(F,T)=H^i_{\et}(F,T)=H^{i+1}_{\et}(F,T_*(1)).
\]

\begin{remark}
There is a canonical isomorphism (see \cite[\S 4c]{Merkurjev13})
\[
C_*(1)\simeq C(F_\sep)\oplus (C_*\tens F_\sep^\times)[-1]
\]
The second term in the direct sum vanishes if $\ch(F)$ does not divide the order of $C_*$ or if $F$ is perfect.
\end{remark}

\begin{comment}
\begin{proposition}
There are canonical isomorphisms
\[
H^i(F,C_*(1))\simeq H^i_{\fppf}(F,C) \quad\text{and}\quad H^j(F, C^*(1))\simeq H^j_{\fppf}(F, C^\circ).
\]
\end{proposition}
\end{comment}

\begin{imp*}
To simplify notation we will write $H^i(F,C)$ for $H^i(F,C_*(1))=H^i_{\fppf}(F,C)$ and $H^i(F,C^\circ)$ for $H^i(F,C^*(1))=H^i_{\fppf}(F,C^\circ)$.
\end{imp*}

Every $C$-torsor $E$ over $F$ has a class $c(E)\in H^1(F,C)$.

\begin{example}
%We have $H^2(F,\mu_n)\simeq \nBr{n}(F)$.
Taking colimits of the connecting homomorphism arising from the sequences $1 \to \Gm \to \GL_d \to \PGL_d \to 1$ or $1 \to \mu_d \to \SL_d \to \PGL_d \to 1$ --- which are exact in the fppf topology --- gives isomorphisms $H^2(K, \Gm) \simeq \Br(K)$ and $H^2(K, \mu_n) \simeq \nBr{n}(K)$ as in \cite[4.4.5]{GS06}\footnote{This reference assumes $\car F$ does not divide $n$, because it uses $H^1$ to denote Galois cohomology.  With our notation, their arguments go through with no change.}, which we use.  %These differ by a sign from the isomorphisms given by the cross product construction, see \cite[p.~397]{Book}.
\end{example}

We use the motivic complex $\Z(2)$ of \'etale sheaves over $F$ defined in \cite{Lichtenbaum87} and \cite{Lichtenbaum90}.
Set $\Q/\Z(2):=\Q/\Z\tiL \Z(2)$. The complex $\Q/\Z(2)$ is the direct sum of two complexes. The first
complex is given by the locally constant \'etale sheaf (placed in degree $0$) the
colimit over $n$ prime to $\ch(F)$ of the Galois modules $\mu_{n}^{\tens 2}:=\mu_{n}\tens\mu_n$.
The second complex is nontrivial only in the case
$p=\ch(F)>0$ and it is defined as
\[
\underset{n}{\textrm{colim}}\ W_n\Omega^2_{log}[-2]
\]
with $W_n\Omega^2_{log}$ the sheaf of \emph{logarithmic de Rham-Witt differentials} (see \cite{Kahn96}).

Note that $H^i(F,\Q/\Z(2))\simeq H^{i+1}(F,\Z(2))$ for $i\geq 3$.

Twisting \eqref{pairing} we get the pairings
\begin{gather}
C_*(1)\stackrel{L}\tens C^*(1) \to \Q/\Z(2) \quad \text{and} \notag \\
H^i(F,C)\tens H^j(F, C^\circ)\to H^{i+j}(F,\Q/\Z(2)). \label{fp1}
\end{gather}

If $S$ is a torus over $F$, we have $S_*(1)=S_*\tens\gm[-1]=S[-1]$ and the pairings
\begin{gather}
S_*\tens S^* \to \Z,\quad S_*(1)\tiL S^*(1) \to \Z(2) \quad \text{and} \notag \\
H^i(F,S)\tens H^j(F, S^\circ)\to H^{i+j+2}(F,\Z(2))=H^{i+j+1}(F,\Q/\Z(2)) \label{fp2}
\end{gather}
if $i+j\geq 2$.

Let
\[
1\to C \to T \to S \to 1
\]
be an exact sequence with $T$ and $S$ tori and $C$ finite. Dualizing we get an exact sequence of dual groups
\begin{equation} \label{dual.seq}
1\to C^\circ \to S^\circ \to T^\circ \to 1.
\end{equation}
We have the homomorphisms
\[
\varphi:S(F)\to H^1(F,C),\quad\quad\psi: H^2(F,C^\circ)\to H^2(F,S^\circ).
\]

\begin{proposition}\label{commute}
For every $a\in S(F)$ and $b\in H^2(F,C^\circ)$, we have $\varphi(a)\cup b = a \cup\psi(b)$ in $H^3(F,\Q/\Z(2))$.
Here the cup products are taken with respect to the pairings \eqref{fp1} and \eqref{fp2} respectively.
\end{proposition}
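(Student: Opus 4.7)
The plan is to verify this identity by a derived-category computation in $\cat{D}^+\operatorname{Sh}_{\et}(F)$.

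First I would recast the ingredients as morphisms in the derived category. The map $\varphi\colon S(F)\to H^1(F,C)$ is induced by the boundary $\partial\colon S\to C_*(1)[1]$ of the triangle $C_*(1)\to T\to S\to C_*(1)[1]$ constructed above; the map $\psi$ is induced by the morphism $\rho\colon C^*(1)\to S^*(1)[1]$ representing the inclusion $C^\circ\hookrightarrow S^\circ$ (since $S^*(1)=S^\circ[-1]$). Both $\partial$ and $\rho$ arise from the triangles obtained by tensoring the short exact sequences $0\to T_*\to S_*\to C_*\to 0$ and $0\to S^*\to T^*\to C^*\to 0$ with $\Z(1)$. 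The pairings \eqref{fp1} and \eqref{fp2} come from morphisms $\mu\colon C_*(1)\tiL C^*(1)\to \Q/\Z(2)$ and $\nu\colon S_*(1)\tiL S^*(1)\to \Z(2)$; to compare them one uses the connecting morphism $\delta\colon \Q/\Z(2)\to \Z(2)[1]$ of $\Z(2)\to \Q(2)\to \Q/\Z(2)\to \Z(2)[1]$, which on cohomology in degrees $\geq 3$ realizes the isomorphism $H^4(F,\Z(2))\cong H^3(F,\Q/\Z(2))$ implicit in \eqref{fp2}.

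Representing $a,b$ by $\tilde a\colon \Z\to S$ and $\tilde b\colon \Z\to C^*(1)[2]$, both $\varphi(a)\cup b$ and $a\cup\psi(b)$ are obtained by composing $\tilde a\tens\tilde b\colon \Z\to (S\tiL C^*(1))[2]$ with morphisms to $\Q/\Z(2)[3]$, and the theorem reduces to the commutativity of the diagram
\[
\begin{CD}
S_*(1)\tiL C^*(1) @>{v\tens\id}>> C_*(1)\tiL C^*(1) @>{\mu}>> \Q/\Z(2) \\
@V{\id\tens\rho}VV @. @V{\delta}VV \\
S_*(1)\tiL S^*(1)[1] @>>{\nu[1]}> \Z(2)[1] @= \Z(2)[1]
\end{CD}
\]
in the derived category, where $v\colon S_*(1)\to C_*(1)$ is the middle morphism of the triangle $T_*(1)\to S_*(1)\to C_*(1)\to T_*(1)[1]$ underlying $\partial$. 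Chasing $\tilde a\tens\tilde b$ around the outer rectangle gives $\delta(\varphi(a)\cup b)=a\cup\psi(b)$ in $H^4(F,\Z(2))$, which is the desired equality in $H^3(F,\Q/\Z(2))$ under $\delta$.

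The main obstacle is establishing this commutative square, with correct signs and shifts. It expresses an adjointness between $v$ and $\rho$ relative to the evaluation pairings $T_*\tens T^*\to \Z$, $S_*\tens S^*\to \Z$, and $C_*\tens C^*\to \Q/\Z$; the shift $\delta$ reflects the identification $C_*=\Ext^1(C^*,\Z)$, since $\mu$ is not a restriction of $\nu$ but arises from it through the extension class of $0\to T_*\to S_*\to C_*\to 0$. I would prove the square by reducing to the lattice level: the two short exact sequences are $R\Hom(-,\Z)$-dual (both coming from $1\to C\to T\to S\to 1$ via character/cocharacter duality), and the compatibility of the evaluation pairings with the inclusions $T_*\to S_*$ and $S^*\to T^*$ propagates through the cone construction $C_*(1)=\operatorname{cone}(T_*(1)\to S_*(1))$ and its dual to the desired commutativity.
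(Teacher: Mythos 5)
Your reduction is essentially the paper's own argument: the paper also works in $\cat{D}^+\operatorname{Sh}_{\et}(F)$ and proves precisely your key square, i.e.\ that $\delta\circ\mu\circ(v\tens\id)$ and $\nu[1]\circ(\id\tens\rho)$ agree on $S_*(1)\tiL C^*(1)$, after which $\varphi(a)\cup b=a\cup\psi(b)$ follows because $H^3(F,\Q/\Z(2))\to H^4(F,\Z(2))$ is an isomorphism. The one step you leave at the level of a plan (``the compatibility of the evaluation pairings propagates through the cone construction'') is exactly where the paper's proof has its content: one extends the perfect pairing $S_*\tens S^*\to\Z$ uniquely to a $\Q$-valued pairing $S_*\tens T^*\to\Q$, tensors the triangle $S^*(1)\to T^*(1)\to C^*(1)\to S^*(1)[1]$ with $S_*(1)$, and maps the result to $\Z(2)\to\Q(2)\to\Q/\Z(2)\to\Z(2)[1]$; the induced map $s\colon S_*(1)\tiL C^*(1)\to\Q/\Z(2)$ on third terms satisfies $\delta\circ s=\nu[1]\circ(\id\tens\rho)$ by the triangle morphism, and $s=\mu\circ(v\tens\id)$ because the finite duality $C_*\tens C^*\to\Q/\Z$ is the reduction mod $\Z$ of that rational pairing --- this is your ``extension class'' remark made precise. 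So your outline is correct and follows the same route; to complete it you should write down this interpolating $\Q$-valued pairing explicitly and settle the sign bookkeeping, which the paper handles with the $(-1)^{ib}$ rule of Remark \ref{signs}.
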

\begin{proof}
The pairing $S_*\tens S^* \to \Z$ extends uniquely to a pairing $S_*\tens T^* \to \Q$. We have then a morphism of exact triangles
\[
\xymatrix{
S_*(1)\tiL S^*(1)\ar[d]\ar[r] & S_*(1)\tiL T^*(1)\ar[d]\ar[r] & S_*(1)\tiL C^*(1)\ar[d]^s\ar[r] & S_*(1)\tiL S^*(1)[1]\ar[d] & \\
\Z(2) \ar[r] & \Q(2) \ar[r] & \Q/\Z(2) \ar[r] & \Z(2)[1]
 }
\]
and a commutative diagram
\[
\xymatrix{
H^1(F,S_*(1))\tens H^2(F,C^*(1))\ar[d]\ar[r] & H^1(F,S_*(1))\tens H^2(F,S^*(1)[1])\ar[d]  \\
H^3(F,\Q/\Z(2)) \ar[r] & H^3(F,\Z(2)[1])
 }
\]
and therefore, a commutative diagram
\[
\xymatrix{
H^0(F,S)\tens H^2(F,C^\circ)\ar[d]\ar[r] & H^0(F,S)\tens H^2(F,S^\circ)\ar[d]  \\
H^3(F,\Q/\Z(2)) \ar@{=}[r] & H^3(F,\Q/\Z(2)).
 }
\]
On the other hand, the composition $S_*(1)\tiL C^*(1)\to C_*(1)\tiL C^*(1)\to \Q/\Z(2)$ coincides with $s$. Therefore, we have a commutative diagram
\[
\xymatrix{
H^1(F,S_*(1))\tens H^2(F,C^*(1))\ar[d]\ar[r] & H^1(F,C_*(1))\tens H^2(F,C^*(1))\ar[d]  \\
H^3(F,\Q/\Z(2)) \ar@{=}[r] & H^3(F,\Q/\Z(2))
 }
\]
and therefore, a diagram
\[
\xymatrix{
H^0(F,S)\tens H^2(F,C^\circ)\ar[d]\ar[r] & H^1(F,C)\tens H^2(F,C^\circ)\ar[d]  \\
H^3(F,\Q/\Z(2)) \ar@{=}[r] & H^3(F,\Q/\Z(2)).
 }
\]
The result follows.
\end{proof}

\begin{remark} \label{signs}
We have used that the diagram
\[
\xymatrix{
H^i(A[a])\tens H^j(B[b])\ar@{=}[d]\ar[r] & H^{i+j}(A[a]\tens B[b])\ar@{=}[d]  \\
H^{i+a}(A)\tens H^{j+b}(B) \ar[r] & H^{i+j+a+b}(A\tens B)
 }
\]
is $(-1)^{ib}$-commutative for all complexes $A$ and $B$.
\end{remark}

Let $A$ be an \'etale algebra over $F$ and $C$ a finite group scheme of multiplicative type over $A$. Then $C':=R_{A/F}(C)$
is a finite group of multiplicative type over $F$. Moreover, ${C'}^\circ\simeq R_{A/F}(C^\circ)$ and there are canonical isomorphisms
\[
\iota: H^i(A,C) \iso H^i(F,C') \quad\text{and}\quad \iota^\circ: H^i(A,C^\circ) \iso  H^i(F,{C'}^\circ).
\]

\begin{lemma}\label{compac}
We have $\iota(x)\cup\iota^\circ(y)=N_{A/F}(x\cup y)$ in $H^{i+j}(F,\Q/\Z(2))$ for every $x\in H^i(A,C)$ and $y\in H^j(A,C^\circ)$.
\end{lemma}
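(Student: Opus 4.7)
The plan is to derive this lemma as an instance of the projection formula for the finite étale morphism $\pi\colon \Spec A \to \Spec F$; equivalently, it is the Galois-cohomology identity $\cor(x\cup \res z)=\cor(x)\cup z$ applied to the appropriate pairing.

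First I would reinterpret the isomorphisms $\iota$ and $\iota^\circ$ via Leray. Because $\pi$ is finite étale, $R^j\pi_*=0$ for $j>0$ on abelian étale sheaves, and hence $H^i(A,\mathcal F)\simeq H^i(F,\pi_*\mathcal F)$. Using the compatibility of Weil restriction with the assignment $C\mapsto C_*(1)$, i.e.\ $\pi_*\(C_*(1)_A\)=(R_{A/F}C)_*(1)_F$ and $\pi_*\(C^*(1)_A\)=\((R_{A/F}C)^\circ\)_*(1)_F$, these Leray isomorphisms are exactly $\iota$ and $\iota^\circ$. In the same spirit, the norm $N_{A/F}$ is the map on cohomology induced by the counit $\pi_*\pi^*\Q/\Z(2)_F\to \Q/\Z(2)_F$ of the adjunction $\pi_!=\pi_*\dashv \pi^!=\pi^*$ available because $\pi$ is finite étale; this recovers the classical trace.

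Second I would verify that the pairing on $F$ defining the $F$-cup product agrees, under the identifications above, with the composition
\[
\pi_*\(C_*(1)_A\)\tiL \pi_*\(C^*(1)_A\)\to \pi_*\(C_*(1)_A\tiL C^*(1)_A\)\to \pi_*\(\Q/\Z(2)_A\)\to \Q/\Z(2)_F,
\]
where the first arrow is the lax monoidality of $\pi_*$, the second is $\pi_*$ applied to the $A$-pairing, and the third is the counit/trace. This compatibility can be checked on character modules, where it reduces to the fact that for a finite-index inclusion $H\le G$ induction and coinduction coincide and the pairing on $R_{A/F}$-objects is the $G$-equivariant pairing obtained by summing the $H$-equivariant pairing $C_*\tens C^*\to \Q/\Z$ over cosets. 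The passage from characters to their $(1)$-twists and from $\Q/\Z$ to $\Q/\Z(2)$ is formal because $\Z(2)_A=\pi^*\Z(2)_F$.

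Once these compatibilities are in place the lemma becomes the derived projection formula $\pi_*(x\cup y)=\pi_*(x)\cup \pi_*(y)$ applied to $x\in H^i(A,C)$ and $y\in H^j(A,C^\circ)$. The main obstacle is the pairing-compatibility verification in the previous paragraph: one must confirm that the $F$-pairing really is the trace of the $A$-pairing and track the signs coming from $\Z(2)$ and the $[-1]$-shifts (cf.\ Remark~\ref{signs}). Everything else is formal.
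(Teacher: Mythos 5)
Your proposal is correct in substance but takes a genuinely different, more sheaf-theoretic route than the paper. The paper argues concretely: it decomposes the base change $C'_A$ as a product $C_1\times\cdots\times C_s$ with $C_1=C$, writes $\iota^{-1}$ as restriction followed by the projection $\pi^*$ and $\iota^\circ$ as the embedding $\varepsilon^*$ followed by $N_{A/F}$, observes that $\res(\iota(x))\cup\varepsilon^*(y)=x\cup y$ because $\varepsilon^*(y)$ is concentrated in the first factor, and finishes with the classical projection formula $a\cup N_{A/F}(w)=N_{A/F}(\res(a)\cup w)$, which applies directly since the pairing there is between $C'_A$ and ${C'}^\circ_A$, both pulled back from $F$. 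Your route instead identifies $\iota,\iota^\circ$ with the Leray isomorphisms for the finite \'etale morphism $\Spec A\to\Spec F$, identifies the $F$-pairing as the trace of the $A$-pairing (checked on character modules via the coset-sum description of the pairing on induced modules), and then appeals to multiplicativity of the Leray/Shapiro isomorphism; this avoids all component bookkeeping and is uniform, at the cost of verifying the lax-monoidality and trace compatibilities that you rightly single out as the real work.

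One caveat on your last step: the displayed identity ``$\pi_*(x\cup y)=\pi_*(x)\cup\pi_*(y)$'' is not, as literally written, the projection formula, and the norm map on cohomology of a fixed coefficient complex is not multiplicative. What your setup actually requires (and what is standard) is that, under the Leray isomorphisms, the $F$-cup product taken with respect to the composite pairing $\pi_*M\tiL\pi_*N\to\pi_*(M\tiL N)\to\pi_*\pi^*\Q/\Z(2)\to\Q/\Z(2)$ equals $N_{A/F}$ of the $A$-cup product; the honest projection formula enters only in the identification $\pi_*\bigl(C_*\tiL\pi^*\Z(1)\bigr)\simeq(\pi_*C_*)\tiL\Z(1)$, i.e.\ $\pi_*(C_*(1)_A)\simeq C'_*(1)$, which you also need for $C^\circ$. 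With the final step stated in that form, your argument goes through and yields the lemma, with the same sign conventions as in Remark \ref{signs}.
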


\begin{proof}
The group scheme $C'_A$ is naturally isomorphic to the product $C_1\times C_2\times\cdots\times C_s$ of group schemes over $A$
with $C_1=C$. Let $\pi: C'_A \to C$ be the natural projection. Similarly,
${C'}^\circ\simeq C_1^\circ\times C_2^\circ\times\cdots\times C_s^\circ$. Write $\varepsilon: C^\circ\to {C'_A}^\circ$
for the natural embedding. Then the inverse of $\iota$ coincides with the composition
\[
H^i(F,C')\xra{\res} H^i(A,C'_A)\xra{\pi^*} H^i(A,C)
\]
and $\iota^\circ$ coincides with the composition
\[
H^i(A,C^\circ) \xra{\varepsilon^*} H^i(A,{C'}^\circ_A) \xra{N_{A/F}} H^i(F,{C'}^\circ).
\]
Since $\pi^*(\iota(x))=x$, we have $\res(\iota(x))=(x, x_2,\dots x_s)$ for some $x_i$. On the other hand, $\varepsilon^*(y)=(y,0,\dots,0)$, hence
\begin{equation}\label{bbb}
\res(\iota(x))\cup\varepsilon^*(y)=x\cup y.
\end{equation}
Finally,
\begin{align*}
\iota(x)\cup\iota^\circ(y)  & = \iota(x)\cup N_{A/F}(\varepsilon^*(y))  \\
& = N_{A/F}\(\res(\iota(x))\cup \varepsilon^*(y)\)  \quad\text{by the projection formula} \\
& = N_{A/F}(x\cup y)  \quad\text{by \eqref{bbb}.}  \qedhere\\
\end{align*}
\end{proof}

\begin{lemma}[Projection formula]\label{1}
Let $f:C\to C'$ be a homomorphism of finite group schemes of multiplicative type.  For $a\in H^m(F, C)$,
the diagram
\[
\xymatrix{
H^k(F, {C'}^\circ) \ar[d]_{f^*}\ar[r]^-{\cup f_*(a)} & H^{k+m}(F,\Q/\Z(2)) \ar@{=}[d] \\
H^k(F, C^\circ) \ar[r]^-{\cup a} & H^{k+m}(F,\Q/\Z(2))
 }
\]
commutes.
\end{lemma}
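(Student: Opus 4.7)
The plan is to lift the projection formula from cohomology down to the level of the derived pairing $C_*(1)\tiL C^*(1)\to \Q/\Z(2)$ that defines the cup product, and to deduce it from the elementary adjoint property of the finite pairing \eqref{pairing}.

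First I would pin down the dual map. The homomorphism $f:C\to C'$ induces on character modules a map $f^*:(C')^*\to C^*$, and applying $\Hom(-,\Q/\Z)$ gives the covariant map $f_*:C_*\to (C')_*$. This corresponds to a morphism of group schemes of multiplicative type $f^\circ:(C')^\circ\to C^\circ$, and it is the induced map on $R\alpha_*((C')^\circ)=(C')^*(1)\to C^*(1)$ that produces $f^*:H^k(F,(C')^\circ)\to H^k(F,C^\circ)$ in the statement.

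The key input is the adjoint identity
\[
\langle f_*(x),y\rangle_{C'} \;=\; \langle x,f^*(y)\rangle_{C}
\]
for $x\in C_*$ and $y\in (C')^*$, which is immediate from the definition of $f_*$ as the $\Q/\Z$-dual of $f^*$. Tensoring this with the product $\Z(1)\tiL \Z(1)\to \Z(2)$ and composing with $\Z(2)\to \Q/\Z(2)$, one obtains a commutative square in $\cat{D}^+\operatorname{Sh}_\et(F)$
\[
\xymatrix{
C_*(1)\tiL (C')^*(1) \ar[d]_{\id\tiL f^*}\ar[r]^-{f_*\tiL \id} & (C')_*(1)\tiL (C')^*(1) \ar[d] \\
C_*(1)\tiL C^*(1) \ar[r] & \Q/\Z(2),
}
\]
whose vertical and horizontal arrows become, on hypercohomology, $\id\tens f^*$, $f_*\tens \id$, and the cup product pairings \eqref{fp1} for $C$ and $C'$ respectively.

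Passing to hypercohomology and evaluating the two compositions at a pure tensor $a\tens b\in H^m(F,C)\tens H^k(F,(C')^\circ)$ gives the equality $f_*(a)\cup b = a\cup f^*(b)$, which is the projection formula. There is no substantive obstacle beyond bookkeeping; the sign twists flagged in Remark \ref{signs} appear identically on both routes around the square (the degree shifts in $C_*(1)$ and $(C')^*(1)$ are unchanged by $f_*$ and $f^*$), so they cancel and no extra sign is introduced.
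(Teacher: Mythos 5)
Your proposal is correct and is essentially the paper's own argument: the paper's proof consists exactly of noting that the cup products are induced by the pairings $C^*\tens C_*\to\Q/\Z$ and ${C'}^*\tens C'_*\to\Q/\Z$, for which the adjunction $\langle f^*(x),y\rangle=\langle x,f_*(y)\rangle$ is obvious. You have merely spelled out the derived-category bookkeeping (twisting by $\Z(1)\tiL\Z(1)$ and passing to hypercohomology, with the sign check) that the paper leaves implicit.
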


\begin{proof}
The pairing used in the diagram are induced by the pairings $C^*\tens C_*\to\Q/\Z$
and ${C'}^*\tens C'_*\to\Q/\Z$. The (obvious) projection formula for these pairing reads $\langle f^*(x),y\rangle=\langle x,f_*(y)\rangle$
for $x\in {C'}^*$ and $y\in C_*$.
\end{proof}

%%%%%%%%%%%%%%%%%%%%%%%%%%%%%%%%%%%%%%%%%%%%%%%%%%%%%%%%%%%%%%%%

\section{Invariants of quasi-trivial tori}

\subsection{Cohomological invariants}
For a field $F$ write $H^j(F)$ for the cohomology group $H^j(F,\Q/\Z(j-1))$, where $j\geq 1$  (see \cite{GMS}).
In particular, $H^1(F)$ is the character group of continuous homomorphisms $\Gamma_F \to \Q/\Z$ and $H^2(F)$ is the Brauer group $\Br(F)$.

The assignment $K\mapsto H^j(K)$ is functorial with respect to arbitrary field extensions. If $K'/K$ is a finite field
extension, we have a well-defined \emph{norm map} $N_{K'/K}: H^j(K')\to H^j(K)$.

The graded group $H^*(F)$ is a (left) module over the Milnor ring $K_*(F)$.

\begin{dfn} \label{inv.def}
Let $\cA$ be a functor from the category of field extensions of $F$ to pointed sets.
A degree $d$ \emph{cohomological invariant of $\cA$} is a collection of maps of pointed sets
\[
\iota_K \!: \cA(K)\to H^d(K)
\]
for all field extensions $K/F$, functorial in $K$.
The degree $d$ cohomological invariants of $\cG$ form an abelian group
denoted by $\Inv^d(\cA)$. If $L/F$ is a field extension, we have a \emph{restriction homomorphism}
\[
\Inv^d(\cA)\to \Inv^d(\cA_L),
\]
where $\cG_L$ is the restriction of $\cG$ to the category of field extensions of $L$.

If the functor $\cA$ factors through the category of groups, we further consider the subgroup $\hInv^d(\cA)$ of $\Inv^d(\cA)$ consisting of those invariants $\iota$ such that $\iota_K$ is a group homomorphism for every $K$.
\end{dfn}

\begin{example}
If $G$ is an algebraic group over $F$, we can view $G$ as a functor taking a field extension $K$ to the group $G(K)$
of $K$-points of $G$; in this case we consider $\hInv^d(G)$. We have also another functor $H^1(G) \!: K \to H^1(K, G)$ and we consider $\Inv^d(G)$.  If $G$ is commutative, then $H^1(K, G)$ is a group for every $K$, and we also consider $\hInv^d(H^1(G))$.
\end{example}

\subsection{Residues}
Our goal is to prove Theorem \ref{invariants} concerning the group $\hInv^d(T)$ for $T$ a quasi-split torus. Such invariants
of order not divisible by $\ch(F)$ we determined in \cite{MPT02}.  We modify the method from \cite{MPT02} so that it works in general.  The difficulty is that the groups $H^j(K)$
do not form a cycle module, because the residue homomorphisms need not exist. 

If $K$ is a field with discrete valuation $v$ and residue field $\kappa(v)$, write $H^j(F)_{nr,v}$ for the subgroup of all elements
of $H^j(F)$ that are split by finite separable extensions $K/F$ such that $v$ admits an unramified extension to $K$. Note that every
element in $H^j(F)_{nr,v}$ of order not divisible by $\ch(F)$ belongs to $H^j(F)_{nr,v}$.

There are
\emph{residue homomorphisms} (see \cite{GMS} or \cite{Kato82})
\[
\partial_v: H^j(K)_{nr,v}\to H^{j-1}(\kappa(v)).
\]
\begin{example}\label{simplest}
Let $K=F(t)$ and let $v$ be the discrete valuation associated with $t$. Then $\kappa(v)=F$ and $\partial_v(t\cdot h_K)=h$
for all $h\in H^{j-1}(F)$.
\end{example}

\begin{lemma}\label{unram}
Let $K'/K$ be a field extension and let $v'$ be a discrete valuation on $K'$ unramified over its restriction $v$ on $K$. Then the diagram
\[
\xymatrix{
 H^j(K)_{nr,v} \ar[d]\ar[r]^-{\partial_v} & H^{j-1}(\kappa(v)) \ar[d]\\
 H^j(K')_{nr,v'} \ar[r]^-{\partial_{v'}} & H^{j-1}(\kappa(v'))
 }
\]
commutes.
\end{lemma}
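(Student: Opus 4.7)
The strategy has two main steps: first reduce to the Henselian setting, then verify the compatibility separately on the prime-to-$p$ and $p$-primary parts of $\Q/\Z(j-1)$, where $p = \ch \kappa(v)$.

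For the reduction, I would use the fact that the residue map factors through the Henselization: for a discrete valuation $v$ on $K$ with Henselization $K^h$, the restriction $H^j(K)\to H^j(K^h)$ carries $H^j(K)_{nr,v}$ into the corresponding subgroup for $K^h$, and there is a commutative square
\[
\xymatrix{
H^j(K)_{nr,v} \ar[d]\ar[r]^-{\partial_v} & H^{j-1}(\kappa(v)) \ar@{=}[d]\\
H^j(K^h)_{nr,v} \ar[r]^-{\partial_v} & H^{j-1}(\kappa(v))
}
\]
since both fields have residue field $\kappa(v)$. Because $v'$ is unramified over $v$, the Henselizations fit into a compatible diagram $K^h\hookrightarrow K'^{h'}$ inducing the residue field extension $\kappa(v')/\kappa(v)$. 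This reduces the lemma to the case where $K$ and $K'$ are Henselian with respect to $v$ and $v'$ respectively.

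In the Henselian case, I would decompose $\Q/\Z(j-1)$ into its prime-to-$p$ part (the colimit of $\mu_n^{\otimes(j-1)}$ for $n$ coprime to $p$) and, in positive residue characteristic, its $p$-primary part (given by Kato's logarithmic de Rham--Witt complex). The residue map respects this decomposition, so it suffices to treat the two parts separately. For the prime-to-$p$ part, $\partial_v$ is the classical Serre residue, arising from the split exact sequence $1\to I_v\to \Gal(K^{\sep}/K)\to \Gal(\kappa(v)^{\sep}/\kappa(v))\to 1$ in which the prime-to-$p$ part of $I_v$ is canonically $\hat{\Z}'(1)$; the required compatibility is then the standard functoriality of this sequence, since the assumption that $v'$ is unramified over $v$ forces the inertia to map into inertia.

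For the $p$-primary part (when $p = \ch\kappa(v)>0$), the residue is Kato's residue from \cite{Kato82}, defined by an explicit symbol formula in terms of a uniformizer of $v$. A uniformizer for $v$ remains a uniformizer for the unramified extension $v'$, so the symbol-level formula transfers verbatim, and functoriality is built into Kato's construction. The main obstacle is this $p$-primary case, since one cannot appeal to the classical inflation-restriction argument and must instead invoke Kato's more technical framework; however, the unramified hypothesis is precisely what is needed to ensure that the symbolic computation of the residue commutes with the restriction.
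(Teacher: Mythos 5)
The paper does not actually prove this lemma: it is quoted as a standard compatibility of the residue maps, with \cite{GMS} and \cite{Kato82} cited for their construction, so the only question is whether your argument is sound. In substance it is, and it is the standard one: reduce to the Henselian/complete case (where $\partial_v$ is defined in the first place), then treat the primary components of $\Q/\Z(j-1)$ separately. Two points should be tightened. First, the relevant dichotomy is at $p=\ch(K)$, not $p=\ch\kappa(v)$: the logarithmic de Rham--Witt description of the $p$-primary part exists only in equal characteristic $p$; in mixed characteristic the $p$-part of the coefficients is still $\operatorname{colim}_m\mu_{p^m}^{\tens(j-1)}$, the tame-inertia argument does not apply to it, and one must quote Kato's mixed-characteristic results directly. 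For the paper's use (divisorial valuations on function fields of $F$-varieties) only the equicharacteristic case occurs, and there your uniformizer argument is made precise by Cohen's theorem together with Kato's computation $\ker\bigl(H^j(\kappa((t)))\to H^j(\kappa_\sep((t)))\bigr)=H^j(\kappa)\oplus t\cdot H^{j-1}(\kappa)$, which is functorial in the pair $(\kappa,t)$; unramifiedness is exactly what lets you take the same $t$ upstairs. Second, in the prime-to-$p$ part, unramifiedness is not what makes inertia map into inertia (that holds for any extension of valuations); what it buys is that the induced map on tame inertia quotients $\hat\Z'(1)\to\hat\Z'(1)$ is the identity rather than multiplication by the ramification index $e$, which is precisely why one gets $\partial_{v'}\circ\res=\res\circ\partial_v$ rather than the general formula with a factor of $e$. (Also, the full inertia sequence need not split --- only its tame quotient does --- but no splitting is actually needed.) With these adjustments your proof is correct and fills in exactly what the paper leaves to the references.
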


%We modify the method \cite{MPT02} so that it works in general.

\subsection{Invariants of tori}
Let $A$ be an \'etale $F$-algebra and $T^A$ the corresponding quasi-split torus, i.e.,
\[
T^A(K)=(A\tens_F K)^\times
\]
for every field extension $K/F$. If $B$ is another \'etale $F$-algebra, then
\[
T^{A\times B}=T^A\times T^B
\]
and
\[
\hInv^d(T^{A\times B})\simeq \hInv^d(T^{A})\oplus \hInv^d(T^{B}).
\]

Write $A$ as a product of fields: $A=L_1\times L_2\times\cdots\times L_s$.
Set 
\[
H^i(A):=H^i(L_1)\oplus H^i(L_2)\oplus\cdots\oplus H^i(L_s).
\]

For $d\geq 2$ define a homomorphism
\[
\alpha^A:H^{d-1}(A)\to \hInv^d(T^A)
\]
as follows. If $h\in H^{d-1}(A)$, then the invariant $\alpha^A(h)$ is defined by
\[
\alpha^A(h)(t)=N_{A\tens K/K}(t\cdot h_{A\tens K})\in H^d(K)
\]
for a field extension $K/F$ and $t\in T^A(K)=(A\tens_F K)^\times$.

\begin{remark} \label{restate}
In the notation of the previous section, $(T^A)^\circ \simeq T^A$, and we have
\[
H^{d-1}(F,(T^A)^\circ)=H^{d-1}(F,T^A)=H^{d-1}(A,\gm)=H^{d-1}(A).
\]
The pairing \eqref{fp2} for the torus $T^A$, $i = 0$, and $j = 2$,
\[
A^\times\tens H^2(A)=T^A(F)\tens H^2(F,(T^A)^\circ)\to H^3(F),
\]
takes $t\tens h$ to $N_{A/F}(t\cup h_A)=\alpha^A(h)(t)$. In other words, the map $\alpha^A$ coincides with the map
\[
H^2(F,(T^A)^\circ)\to \hInv^3(T^A)
\]
given by the cup product.
\end{remark}

Note that every element $h\in H^{d-1}(A)$ is split by an \'etale extension of $A$, hence the invariant $\alpha^A(h)$ vanishes
when restricted to $F_\sep$.

\begin{ques}
Do all invariants in $\hInv^d(T^A)$ vanish when restricted to $F_\sep$?  
\end{ques}

The answer is ``yes'' when $\car F = 0$.  Indeed, for any prime $p \ne \car F$ and for $F$ separably closed, the zero map is the only invariant $T^A(*) \to H^d(*, \Q_p/\Z_p(d - 1))$ that is a homomorphism of groups \cite[Prop.~2.5]{Merkurjev99}. 

The main result of this section is:

\begin{theorem}\label{invariants}
The sequence
\[
0\to H^{d-1}(A)\xra{\alpha^A} \hInv^d(T^A)\xra{\res} \hInv^d(T^A_\sep)
\]
is exact.
\end{theorem}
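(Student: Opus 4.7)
The plan is to adapt the strategy of \cite{MPT02} to arbitrary characteristic, replacing the cycle-module machinery (which is unavailable in general) by the partial residue maps $\partial_v$ on $H^j(K)_{nr,v}$, combined with the assumption that our invariants die over $F_\sep$.

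Since $T^{A\times B}=T^A\times T^B$ splits both $\hInv^d$ and $H^{d-1}$ as direct sums compatibly with $\alpha$, one reduces at once to the case $A=L$ a separable field extension of $F$. The equality $\res_{F_\sep}\circ\alpha^L=0$ is then immediate, because $L\tens_F F_\sep$ is a product of copies of $F_\sep$ and $H^{d-1}(F_\sep)=0$ for $d\geq 2$. For injectivity of $\alpha^L$, I would take an indeterminate $s$ and evaluate on $s\in L(s)^\times\hookrightarrow T^L(L(s))$, then take the residue at $v=(s)$; by Example~\ref{simplest} and the compatibility of the norm $N_{L\tens L(s)/L(s)}$ with residues at unramified extensions, $\partial_v\alpha^L(h)(s)$ computes to an explicit Galois-theoretic expression in $h$, from which $h$ can be recovered, possibly after varying the construction over suitable auxiliary extensions.

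For the main step, let $\iota\in\hInv^d(T^L)$ vanish over $F_\sep$. Fix an $F$-basis $e_1,\dots,e_n$ of $L$, set $K_0=F(x_1,\dots,x_n)$, and let $t=\sum x_ie_i\in (L\tens K_0)^\times$ be the generic element of $T^L$ over $K_0$. Then $\iota(t)\in H^d(K_0)$ dies over $F_\sep(x_1,\dots,x_n)$, hence is killed by $F'K_0$ for some finite separable $F'/F$; since $F'K_0/K_0$ is unramified at every divisorial valuation of $K_0$, one concludes $\iota(t)\in H^d(K_0)_{nr,v}$ for every divisorial $v$, and iterated partial residues become available. Taking the iterated residue along a flag $(f(x_1)=0)$, $(x_2=l_2)$, \dots, $(x_n=l_n)$, where $f$ is the minimal polynomial of a primitive element of $L/F$ and the $l_i\in L$ are chosen to preserve unramifiedness, one extracts an element $h\in H^{d-1}(L)$. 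The claim is then $\iota=\alpha^L(h)$. To verify this, observe that $\iota-\alpha^L(h)$ still vanishes over $F_\sep$ and, by construction, has vanishing iterated residue along the chosen flag; the homomorphism property of invariants together with Proposition~\ref{commute} should then force $\iota-\alpha^L(h)=0$.

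The main obstacle is this final verification. In the characteristic-coprime setting of \cite{MPT02}, Gersten/cycle-module exactness forces any unramified class that dies over $F_\sep$ to be trivial; here the partial residue maps $\partial_v$ do not assemble into a cycle module, so one must combine the $F_\sep$-vanishing hypothesis with Proposition~\ref{commute} and the compatibility of residues with norms to achieve the same rigidity. A secondary obstacle is choosing the iterated residue delicately enough that it returns $h$ itself, rather than some Galois-averaged version such as $N_{L\tens L/L}(h_{L\tens L})$ that would only determine $h$ up to a corestriction-restriction.
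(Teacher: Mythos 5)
There is a genuine gap, and it is exactly the one you flag yourself: the ``final verification'' that $\iota-\alpha^L(h)=0$. Knowing that an invariant vanishes over $F_\sep$ and that one (or even all) of its iterated residues along your chosen flag vanish does not, by itself, force the invariant to be zero --- in the prime-to-$\ch F$ setting this rigidity comes from Gersten/cycle-module exactness, which is precisely what is unavailable here, and Proposition~\ref{commute} (a compatibility of two cup-product pairings) has no bearing on this step, so ``should then force'' is not an argument. Your secondary worry (that the residue only returns a norm-averaged version of $h$) is also real, and your injectivity argument for $\alpha^L$ over a general field $L$ is left at the level of a hope (``possibly after varying the construction'').

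The paper resolves all of this by a different mechanism that your proposal is missing. It never tries to prove directly, for a general field $L$, that an $F_\sep$-trivial invariant with vanishing residue is zero. Instead it defines $\beta^A(u)=\partial^A(u(g_{\operatorname{gen}}))$, taking the residue along the boundary divisor $S^A$ of the compactification $\A(A)\supset T^A$, and then runs an induction on the \emph{height} of $A$: since $L$ is a canonical direct factor of $L\tens_F L$ and the $L$-algebra $L\tens_F L$ has strictly smaller height, $\beta^L$ (resp.\ $\alpha^L$) is a direct summand of $\beta^{L\tens L}$ (resp.\ $\alpha^{L\tens L}$), and Proposition~\ref{smaller}, Lemma~\ref{com2} and Lemma~\ref{com3} all follow from the base case $A=F$. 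That base case (Lemma~\ref{simmm}) is where the real work happens, and it is not a residue-rigidity argument at all: one evaluates $u$ on $t\in L=F((t))$, invokes Kato's theorem that $\Ker\big(H^d(L)\to H^d(F_\sep((t)))\big)=H^d(F)\oplus t\cdot H^{d-1}(F)$, so $u_L(t)=h'_L+t\cdot h_L$, and then the substitution homomorphism $f(t)\mapsto f(at)$ into $K((t))$ together with the homomorphism property $u(at)-u(t)=u(a)$ and injectivity of $H^d(K)\to H^d(K((t)))$ pins down $u(a)=a\cdot h_K$ exactly, with no leftover ambiguity. Without this reduction-to-$\gm$ device (or a substitute for it), your iterated-residue construction over a general $L$ does not close.
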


That is, defining $\thInv^d(T^A) := \ker \res$, we claim that $\alpha^A \!: H^{d-1}(A) \iso \thInv^d(T^A)$.

The torus $T^A$ is embedded into the affine space $\A(A)$ as an open set. Let $Z^A$ be the closed
complement $\A(A)\setminus T^A$ and let $S^A$ be the smooth locus of $Z^A$ (see \cite{MPT02}). Then $S^A$ is a smooth scheme over $A$. In fact,
$S^A$ is a quasi-split torus over $A$ of the $A$-algebra $A'$ such that $A\times A'\simeq A\tens_F A$.
We have $A=L_1\times L_2\times\cdots\times L_s$,
where the $L_i$'s are finite separable field extensions of $F$, and the connected components of $S^A$ (as well as the irreducible components of $Z^A$)
are in $1$-$1$ correspondence with the factors $L_i$. Let $v_i$ for $i=1,2,\dots, s$ be the discrete valuation of the function field $F(T^A)$ corresponding to the $i^{th}$ connected
component $S_i$ of $S^A$, or equivalently, to the $i^{th}$ irreducible component $Z_i$ of $Z^A$. The residue field of $v_i$ is equal to the
function field $F(Z_i)=F(S_i)$.
We then have the residue homomorphisms
\[
\partial_i: H^d(F(T^A))_{nr,v_i}\to H^{d-1}(F(Z_i))=H^{d-1}(F(S_i)).
\]
Write $\widetilde H^d(F(T^A))$ for the kernel of the natural homomorphism $H^d(F(T^A))\to H^d(F_\sep(T^A))$. Since every extension of the valuation $v_i$ to
$F_\sep(T^A)$ is unramified, we have $\widetilde H^d(F(T^A))\subset H^d(F(T^A))_{nr,v_i}$ for all $i$. Write $F(S^A)$ for the product of $F(S_i)$ over all $i$.
The sum of the restrictions of the maps $\partial_i$ on $\widetilde H^d(F(T^A))$ yields a homomorphism
\[
\partial^A: \widetilde H^d(F(T^A))\to H^{d-1}(F(S^A)).
\]

Applying $u \in \thInv^d(T^A)$ to the generic element $g_{\operatorname{gen}}$ of $T^A$ over
the function field $F(T^A)$, we get a cohomology class $u(g_{\operatorname{gen}})\in H^d(F(T^A))$. By assumption on $u$, we have
$u(g_{\operatorname{gen}})\in \widetilde H^d(F(T^A))$. Applying $\partial^A$, we get a homomorphism
\[
\beta^A: \thInv^d(T^A)\to H^{d-1}(F(S^A)),\quad u\mapsto \partial^A(u(g_{\operatorname{gen}})).
\]

If $B$ is another \'etale $F$-algebra. We have (see \cite{MPT02})
\[
S^{A\times B}=S^A\times T^B + T^A\times S^B.
\]
In particular, $F(S^A)\subset F(S^{A\times B})\supset  F(S^B)$.
Lemma \ref{unram} then gives:

\begin{lemma}\label{com1}
The diagram
\[
\xymatrix{
 \thInv^d(T^A)\oplus \thInv^d(T^B) \ar@{=}[d]\ar[r]^-{\beta^A\oplus\beta^B} & H^{d-1}(F(S^A))\oplus H^{d-1}(F(S^B)) \ar[d]\\
 \thInv^d(T^{A\times B}) \ar[r]^-{\beta^{A\times B}} & H^{d-1}(F(S^{A\times B}))
 }
\]
commutes.
\end{lemma}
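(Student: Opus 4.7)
The plan is to unwind both sides of the diagram on an element $(u_A,u_B)$ using the explicit description of $\beta$, the product decomposition of $S^{A\times B}$, and the functoriality of residues (Lemma \ref{unram}).

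First, I would verify that the left vertical identification is legitimate. Since $T^{A\times B}=T^A\times T^B$, any element $u \in \thInv^d(T^{A\times B})$ can be recovered from the invariants
\[
u_A(t_A) := u(t_A,1), \qquad u_B(t_B) := u(1,t_B),
\]
and because $u_K$ is a homomorphism for every $K/F$, one has $u(t_A,t_B) = u_A(t_A)+u_B(t_B)$. Both $u_A$ and $u_B$ vanish after restriction to $F_\sep$, giving the direct sum decomposition $\thInv^d(T^{A\times B}) \cong \thInv^d(T^A)\oplus\thInv^d(T^B)$.

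Next, I would analyze $\beta^{A\times B}(u)$ componentwise. Using $S^{A\times B}=(S^A\times T^B)\sqcup(T^A\times S^B)$, the irreducible components of $Z^{A\times B}$ are indexed by the components of $Z^A$ and of $Z^B$. The generic element of $T^{A\times B}$ over $F(T^{A\times B})=F(T^A)(T^B)=F(T^B)(T^A)$ factors as $(g^A_{\mathrm{gen}},g^B_{\mathrm{gen}})$, so by the decomposition above,
\[
u(g^A_{\mathrm{gen}},g^B_{\mathrm{gen}}) = u_A(g^A_{\mathrm{gen}})_{F(T^{A\times B})} + u_B(g^B_{\mathrm{gen}})_{F(T^{A\times B})}.
\]
Now fix a component $S^A_i$ of $S^A$ and let $w_i$ be the discrete valuation on $F(T^{A\times B})$ corresponding to $S^A_i\times T^B$. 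Then $w_i$ restricts to the valuation $v_i$ on $F(T^A)$ associated to $S^A_i$, and this restriction is unramified (the component $S^A_i\times T^B$ dominates $S^A_i$ with generically trivial ramification). The term $u_B(g^B_{\mathrm{gen}})$ lives in $\widetilde H^d(F(T^B))$ pulled back to $F(T^{A\times B})$ via a purely transcendental extension in which $w_i$ is itself unramified, so $\partial_{w_i}$ annihilates it. By Lemma \ref{unram} applied to $v_i\subset w_i$,
\[
\partial_{w_i}\bigl(u(g^A_{\mathrm{gen}},g^B_{\mathrm{gen}})\bigr) = \partial_{w_i}\bigl(u_A(g^A_{\mathrm{gen}})_{F(T^{A\times B})}\bigr) = \partial_{v_i}\bigl(u_A(g^A_{\mathrm{gen}})\bigr)_{F(S^A_i\times T^B)},
\]
which is exactly the image of the $S^A_i$-component of $\beta^A(u_A)$ under the right vertical map of the diagram. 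A symmetric calculation handles the valuations coming from components of $S^B$, and summing over $i$ yields the commutativity.

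The main obstacle is verifying cleanly that $\partial_{w_i}$ kills $u_B(g^B_{\mathrm{gen}})$ and that the residue $\partial_{w_i}$ applied to $u_A(g^A_{\mathrm{gen}})_{F(T^{A\times B})}$ really coincides, under the natural inclusion $F(S^A_i)\hookrightarrow F(S^A_i\times T^B)$, with the restriction of $\partial_{v_i}(u_A(g^A_{\mathrm{gen}}))$; both are instances of the unramified base change statement in Lemma \ref{unram}, applied in two different directions (purely transcendental extension of the function field, and of the residue field).
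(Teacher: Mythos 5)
Your argument is correct and is essentially the paper's: the paper proves this lemma simply by invoking Lemma \ref{unram} together with the decomposition $S^{A\times B}=S^A\times T^B + T^A\times S^B$ and the inclusions $F(S^A)\subset F(S^{A\times B})\supset F(S^B)$, and your write-up just carries out that componentwise residue computation explicitly. The only caveat is that the vanishing of the cross-term residue (a class pulled back from $F(T^B)$, on which $w_i$ restricts trivially, has zero residue at $w_i$) is not literally an instance of Lemma \ref{unram} as stated, but the standard companion fact that classes coming from a subfield of the valuation ring are unramified with trivial residue --- precisely the step the paper also leaves implicit.
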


Recall that $S^A$ is a smooth scheme over $A$ with an $A$-point. It follows that $A\subset F(S^A)$ and the natural homomorphism
\[
H^j(A)\to H^j(F(S^A))
\]
is injective by \cite[Proposition A.10]{GMS}. We shall view $H^j(A)$ as a subgroup of $H^j(F(S^A))$.

Let $A=L_1\times L_2\times\cdots\times L_s$ be the decomposition of an \'etale $F$-algebra $A$ into a product of fields. The
\emph{height} of $A$ is the maximum of the degrees $[L_i:F]$. The height of $A$ is $1$ if and only if $A$ is split. The
following proposition will be proved by induction on the height of $A$.

\begin{proposition}\label{smaller}
The image of the homomorphism $\beta^A$ is contained in $H^{d-1}(A)$.
\end{proposition}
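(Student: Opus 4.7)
The plan is to proceed by induction on the height of $A$. First I would apply Lemma \ref{com1} together with the decomposition $\hInv^d(T^{A\times B}) = \hInv^d(T^A)\oplus \hInv^d(T^B)$ --- which is valid because any group-homomorphism invariant of $T^A\times T^B$ splits as $u(t_A,t_B) = u(t_A,1) + u(1,t_B)$ --- to reduce to the case where $A = L$ is a single field. The base case (height $1$, i.e.\ $L = F$) is trivial: $S^L = \{0\}$, so $F(S^L) = F = A$.

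For the inductive step with $L/F$ separable of degree $n > 1$, my strategy is to extend scalars along $F\to L$. Since $L/F$ is separable, $L\otimes_F L \simeq L\times L'$ as $L$-algebras with $L'$ an \'etale $L$-algebra of $L$-dimension $n-1$, so every field factor of $L\times L'$ has $L$-degree at most $n-1$. Over $L$ the torus $T^L$ becomes $T^{L\otimes_F L/L} \simeq T^{L/L}\times T^{L'/L}$, and the restricted invariant $u|_L$ lies in $\thInv^d(T^L_L)$, decomposing by Lemma \ref{com1} as $u_1 + u_2$. The induction hypothesis applied over $L$ to each summand gives
\[
\beta^{L\times L'/L}(u|_L) \;\in\; H^{d-1}(L)\oplus H^{d-1}(L') \;=\; H^{d-1}(L\times L') \;\subset\; H^{d-1}(L(S^{L\times L'/L})).
\]

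The crux is to identify $\beta^L(u)$ with the first summand $\beta^{L/L}(u_1)\in H^{d-1}(L)$. Since $S^L \simeq T^{L'/L}$ as an $L$-scheme, $F(S^L) = L(T^{L'/L})$, which coincides with the function field of the component $S^{L/L}\times T^{L'/L} = T^{L'/L}$ of $S^{L\times L'/L} = (S^L)_L$. Hence the valuation $v_1$ on $F(T^L)$ extends to a valuation $w_1$ on $L(T^L_L) = L\otimes_F F(T^L)$ with residue field $\kappa(w_1) = L(T^{L'/L})$ canonically identified with $\kappa(v_1) = F(S^L)$; this extension is unramified since base change to the field $L$ is regular. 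Lemma \ref{unram} therefore yields
\[
\partial_{w_1}\bigl(u_L(g_{\operatorname{gen},L})\bigr) \;=\; \beta^L(u)
\]
under the identification $\kappa(w_1) = \kappa(v_1)$, while by Lemma \ref{com1} applied over $L$ the left side equals the first component $\beta^{L/L}(u_1)$, which lies trivially in $H^{d-1}(L)$ since $S^{L/L}$ is a point. Hence $\beta^L(u) \in H^{d-1}(L) = H^{d-1}(A)$.

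The main obstacle is the bookkeeping in the base-change step: confirming that $(S^L)_L$ decomposes as claimed and that the first of its components has function field coinciding exactly with $F(S^L)$, so that Lemma \ref{unram} returns $\beta^L(u)$ itself (rather than only its image in some larger field), allowing the induction hypothesis over $L$ to force $\beta^L(u)$ into $H^{d-1}(L)$.
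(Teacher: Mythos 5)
Your argument is correct, and at bottom it is the same mechanism as the paper's: reduce to $A=L$ a field via Lemma \ref{com1}, handle $L=F$ trivially, and then exploit that $L$ is a canonical (multiplication) factor of $L\otimes_F L$, so that over $L$ the torus $T^L$ becomes $T^{(L\otimes_F L)/L}$ and the component of $S^{(L\otimes_F L)/L}$ indexed by that factor has function field $F(S^L)$. The paper compresses exactly this comparison into the sentence ``$\beta^L$ is a direct summand of $\beta^{L\otimes L}$'' and then quotes the induction hypothesis $\Im(\beta^{L\otimes L})\subset H^{d-1}(L\otimes L)$ for the $L$-algebra $L\otimes_F L$; you instead make the comparison explicit via Lemma \ref{unram} (unramifiedness coming from separability of $L/F$ --- your word ``regular'' is not quite the right justification, but the fact is correct) and then identify the relevant component, via Lemma \ref{com1} over $L$, as the restriction of $\beta^{L/L}(u_1)\in H^{d-1}(L)$. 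A pleasant byproduct of your bookkeeping is that the paragraph in which you invoke the induction hypothesis over $L$ is actually redundant: since $S^{L/L}=\Spec L$, the component you need lies in $H^{d-1}(L)$ for trivial reasons, so your argument proves the proposition with no induction on height beyond the reduction to a field, whereas the paper's phrasing genuinely uses the inductive statement for $L\otimes_F L$. The one point you assert rather than verify --- that the residue-field identification $\kappa(w_1)\simeq\kappa(v_1)=F(S^L)$ is compatible with the two $L$-structures, so that the subgroup $H^{d-1}(L)$ is the same on both sides --- is precisely what makes the factor ``canonical'' (the multiplication idempotent identifies the two copies of $L$), and the paper leaves it at the same level of detail, so I do not count it as a gap.
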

\begin{proof}
By Lemma \ref{com1} we may assume that $A=L$ is a field. If $L=F$, we have $S^A=\Spec F$, so $A=F(S^A)$ and the statement is clear.

Suppose $L\neq F$.
The algebra $L$ is a canonical direct factor of $L\tens_F L$. It follows that
the homomorphism $\beta^L$ is a direct summand of $\beta^{L\tens L}$. Since the height of the $L$-algebra $L\tens_F L$ is less than the
height of $A$, by the induction hypothesis, $\Im(\beta^{L\tens L})\subset H^{d-1}(L\tens L)$. It follows that
$\Im(\beta^{L})\subset H^{d-1}(L)$.
\end{proof}

It follows from Proposition \ref{smaller} that we can view $\beta^A$ as a homomorphism
\[
\beta^A: \thInv^d(T^A)\to H^{d-1}(A).
\]

We will show that $\alpha^A$ and $\beta^A$ are isomorphisms inverse to each other. First consider the simplest case.

\begin{lemma}\label{simmm}
The maps $\alpha^A$ and $\beta^A$ are isomorphisms inverse to each other in the case $A=F$.
\end{lemma}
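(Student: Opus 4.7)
When $A=F$, the torus $T^F$ is $\Gm$ and the complement $S^A$ degenerates to $\Spec F$, so that $F(S^A)=F$ and $\beta^F$ sends $u$ to $\partial_{v_0}\bigl(u_{F(t)}(t)\bigr)\in H^{d-1}(F)$, with $v_0$ denoting the $t$-adic valuation on $F(t)$. The relation $\beta^F\circ\alpha^F=\id_{H^{d-1}(F)}$ is then immediate from Example \ref{simplest}, since $\alpha^F(h)_{F(t)}(t)=t\cdot h_{F(t)}$ has residue $h$ at $v_0$. In particular $\alpha^F$ is injective, and proving $\alpha^F\circ\beta^F=\id$ reduces to the following claim: any $u'\in\thInv^d(\Gm)$ with $\beta^F(u')=0$ must be identically zero.

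To prove the claim, fix a field extension $K/F$ and an element $a\in K^\times$, and consider the $K$-algebra automorphism $\mu_a\!:K(t)\iso K(t)$ sending $t\mapsto at$. Computing $u'_{K(t)}(at)$ in two ways---once using the group-homomorphism property of $u'$ over $K(t)$ and once using the functoriality of $u'$ with respect to $\mu_a$---gives
\[
u'_K(a)\big|_{K(t)} + u'_{F(t)}(t)\big|_{K(t)} = u'_{K(t)}(at) = \mu_{a*}\bigl(u'_{F(t)}(t)\big|_{K(t)}\bigr),
\]
so that
\[
u'_K(a)\big|_{K(t)} = (\mu_{a*}-\id)\bigl(u'_{F(t)}(t)\big|_{K(t)}\bigr)\quad\text{in}\quad H^d(K(t)).
\]

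By assumption and Lemma \ref{unram}, the class $u'_{F(t)}(t)\big|_{K(t)}$ is unramified at $v_0$; let $\eta\in H^d(F)$ denote its specialization there, so this class specializes to $\eta\big|_K$. Since $\mu_a$ preserves the valuation $v_0$ and induces the identity on the residue field $K$, the naturality of the residue and specialization maps with respect to $v_0$-preserving field automorphisms shows that $\mu_{a*}\bigl(u'_{F(t)}(t)\big|_{K(t)}\bigr)$ is also unramified at $v_0$ with specialization $\eta\big|_K$. Hence the right-hand side of the displayed identity specializes to $0$ at $v_0$, while the left-hand side, being pulled back from $H^d(K)$, specializes to $u'_K(a)$. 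We conclude $u'_K(a)=0$ for all $K$ and all $a$, so $u'=0$ as desired.

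The only ingredient beyond what has already been set up in the excerpt is the specialization analog of Lemma \ref{unram}---the compatibility of the specialization map with $v_0$-preserving field maps inducing the identity on residue fields---which is standard. The main conceptual point, rather than any technical obstacle, is the substitution $t\mapsto at$ on $\Gm$: it transports $v_0$ to itself regardless of $a$, and thereby lets the single piece of residue data $\beta^F(u')$ control $u'$ on every field extension.
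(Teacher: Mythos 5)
Your argument is correct, and its engine is the same as the paper's: the substitution $t \mapsto at$ combined with the group-homomorphism property, so that the single piece of data attached to the $t$-adic place controls $u'$ over every extension $K/F$. The packaging differs. After establishing $\beta^F\circ\alpha^F=\id$ exactly as in the paper, the paper finishes by proving surjectivity of $\alpha^F$: it passes to the complete field $L=F((t))$, invokes Kato's theorem \cite{Kato82} to write $u_L(t)=h'_L+t\cdot h_L$, and then the substitution $t\mapsto at$ (as a field map $L\to K((t))$) exhibits $u=\alpha^F(h)$. You instead prove that $\beta^F$ has trivial kernel (an equivalent reformulation, given $\beta^F\circ\alpha^F=\id$), running the substitution as an automorphism $\mu_a$ of $K(t)$ and using residue and specialization at $v_0$ in place of Kato's explicit decomposition. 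Two caveats on what you call ``standard.'' First, the existence of the specialization map for your classes and its compatibilities are not free here: $H^d(\cdot,\Q/\Z(d-1))$ is not a cycle module in characteristic $p$ (the paper stresses this), and the correct justification is precisely the step you omitted, namely passing to the completion $K((t))$ and applying Kato's theorem to the subgroup killed by the maximal unramified extension, which contains your classes because $u'\in\thInv^d(\Gm)$; so your route ultimately consumes the same input the paper makes explicit. Second, specialization with respect to the uniformizer $t$ is \emph{not} preserved by $\mu_a$ in general: on a class of the form $c+t\cdot h$ it shifts by $a\cdot h$, so the ``naturality'' you invoke is valid only because you first arranged the residue to vanish via $\beta^F(u')=0$ and Lemma \ref{unram}; your proof does arrange this, but it is worth recognizing that this hypothesis, not a general naturality principle, is what makes the step go through.
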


\begin{proof}
If $A=F$, we have $T^A=\gm$. The generic element $g_{\operatorname{gen}}$ is equal to $t\in F(t)^\times=F(\gm)$.
Let $h\in H^{d-1}(A)=H^{d-1}(F)$. Then the invariant $\alpha^F(h)$ takes $t$ to $t\cdot h\in\widetilde H^d(F(t))$.
By example \ref{simplest}, $\beta^F(\alpha^F(h))=\partial_v(t\cdot h)=h$, i.e., the composition $\beta^F\circ\alpha^F$
is the identity. It suffices to show that $\alpha^F$ is surjective.

Take $u\in\thInv^d(\gm)$. We consider $t$ as an
element of the complete field $L:=F((t))$ and let $x=u_L(t)\in H^d(L)$. By assumption, $x$ is split by the maximal unramified extension
$L':=F_\sep((t))$ of $L$. By a theorem of Kato \cite{Kato82},
\[
x\in\Ker\big(H^d(L)\to H^d(L')\big)=H^d(F)\oplus t\cdot H^{d-1}(F),
\]
i.e., $x=h'_L+ t\cdot h_L$ for some $h'\in H^d(F)$ and $h\in H^{d-1}(F)$.

Let $K/F$ be a field extension. We want to compute $u_K(a)\in H^d(K)$ for an element $a\in K^\times$.
Consider the field homomorphism $\varphi:L\to M:=K((t))$ taking a power series $f(t)$ to $f(at)$. By functoriality,
\[
u_M(at)=u_M(\varphi(t))=\varphi_*(u_L(t))=\varphi_*(x)=\varphi_*(h'_L+ t\cdot h_L)=h'_M+(at)\cdot h_M,
\]
therefore,
\[
u_M(a)=u_M(at)-u_M(t)=(h'_M+(at)\cdot h_M)-(h'_M+t\cdot h_M)=a\cdot h_M.
\]
It follows that $u(a)=a\cdot h_K$ since the homomorphism $H^d(K)\to H^d(M)$ is injective by \cite[Proposition A.9]{GMS}.
We have proved that $u=\alpha^A(h)$, i.e., $\alpha^A$ is surjective.
\end{proof}

\begin{lemma}\label{com2}
The homomorphism $\beta^A$ is injective.
\end{lemma}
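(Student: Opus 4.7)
The plan is to follow the structure of Lemma \ref{simmm} and Proposition \ref{smaller}: proceed by reducing to a single field via Lemma \ref{com1}, and then mimic the change-of-variables computation used in the $A=F$ case.

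By Lemma \ref{com1} together with the compatible direct-sum decomposition $\thInv^d(T^{A_1\times A_2}) = \thInv^d(T^{A_1})\oplus \thInv^d(T^{A_2})$, I may assume $A=L$ is a single field extension of $F$. If $L=F$, Lemma \ref{simmm} already shows $\beta^F$ is an isomorphism, so assume $L\neq F$.

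Let $u\in\thInv^d(T^L)$ with $\beta^L(u)=0$. Applying $u$ to the generic element $g_{\operatorname{gen}}\in T^L(F(T^L))$ yields a class $u(g_{\operatorname{gen}})\in\widetilde H^d(F(T^L))$ whose residue at the boundary valuation $v$ along $Z^L$ is zero. For an arbitrary field extension $K/F$ and an element $t\in T^L(K)=(L\otimes_F K)^\times$, I would adapt the trick of Lemma \ref{simmm}: pass to $M:=K((x))$ and the unit $tx := t\cdot(1\otimes x)\in T^L(M)$. Since $u$ is a group homomorphism,
\[
u_M(t) = u_M(tx) - u_M(1\otimes x).
\]
Each of $u_M(tx)$ and $u_M(1\otimes x)$ is the pullback of $u(g_{\operatorname{gen}})$ along an $F$-algebra homomorphism $L\otimes_F F(T^L)\to L\otimes_F M$ sending $g_{\operatorname{gen}}$ to $tx$, resp.\ to $1\otimes x$. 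By Kato's structure theorem \cite{Kato82} applied to the completion of $F(T^L)$ at $v$, the vanishing of the residue forces the restriction of $u(g_{\operatorname{gen}})$ to this completion to lie in the unramified piece; this unramified part is insensitive to the ``translation by $t$'' that distinguishes the two specializations (analogous to the cancellation $(at)\cdot h - (t)\cdot h = (a)\cdot h$ in Lemma \ref{simmm}, where the unramified summand drops out), and so $u_M(tx)=u_M(1\otimes x)$, giving $u_M(t)=0$. Since $H^d(K)\hookrightarrow H^d(M)$ by \cite[Proposition A.9]{GMS}, we conclude $u_K(t)=0$, so $u=0$.

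The main obstacle is rigorously carrying out the Kato decomposition of $u(g_{\operatorname{gen}})$ in the completion of the \'etale $F(T^L)$-algebra $L\otimes_F F(T^L)$, which is generally a product of complete discretely valued fields rather than a single one: one must perform the analysis factor by factor and verify that the two specializations $g_{\operatorname{gen}}\mapsto tx$ and $g_{\operatorname{gen}}\mapsto 1\otimes x$ act identically on the unramified piece in every factor, so that the subtraction kills everything and $u_M(t)$ vanishes.
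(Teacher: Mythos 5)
Your reduction to the case $A=L$ a single field is fine (though to deduce injectivity of $\beta^{A\times B}$ from injectivity of the summands via Lemma \ref{com1} you also need the right-hand vertical map there to be injective, which is \cite[Proposition A.10]{GMS}). The genuine gap is in the case $L\neq F$: the step ``$u_M(tx)=u_M(1\otimes x)$'' is the whole content of the lemma and is not established, and the analogy with Lemma \ref{simmm} breaks down in three ways. First, $u_M(tx)$ and $u_M(1\otimes x)$ are not pullbacks of $u(g_{\operatorname{gen}})\in H^d(F(T^L))$: a class over $F(T^L)$ can only be pulled back along a field embedding $F(T^L)\to M$, and the point $1\otimes x$ (coordinates $(x,0,\dots,0)$ in an $F$-basis of $L$) is not a generic point of $T^L$, so no such embedding exists; in Lemma \ref{simmm} this is invisible because $\dim\Gm=1$ and $f(t)\mapsto f(at)$ really is a field homomorphism. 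Second, Kato's theorem is used there for a complete discretely valued field whose residue field is the base field $F$ and for the kernel of restriction to the \emph{maximal} unramified extension $F_\sep((t))$; at the completion of $F(T^L)$ at $v$ the residue field is the function field $F(S^L)$ of a positive-dimensional variety and $F_\sep(T^L)$ is far from the maximal unramified extension, so the decomposition $h'+\pi\cdot h$ with both parts defined over the residue field is not available in the form you need. Third, even granting such a decomposition, $\beta^L(u)=0$ only kills the ramified part $h$; the unramified part $h'$ now lives over $F(S^L)$ rather than over $F$, so there is no reason its two specializations agree --- the cancellation in Lemma \ref{simmm} works precisely because $h'\in H^d(F)$ is constant. (Note also that both points $tx$ and $1\otimes x$ specialize as $x\to 0$ to the origin of $\A(L)$, a singular point of $Z^L$, so they do not interact cleanly with the valuation $v$ at all.)

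The paper avoids all of this with an induction on the height of $A$: after reducing to $A=L$ a field, it observes that $L$ is a canonical direct factor of the $L$-algebra $L\otimes_F L$, so $\beta^L$ is a direct summand of $\beta^{L\otimes L}$, and $L\otimes_F L$ has smaller height over $L$ than $L$ has over $F$; by induction $\beta^{L\otimes L}$ is injective, hence so is $\beta^L$, the base case $L=F$ being Lemma \ref{simmm}. If you want to keep your direct approach you would have to build the missing specialization/residue machinery relative to $F_\sep(T^L)/F(T^L)$, which is essentially the cycle-module formalism the paper explicitly says is unavailable here; the height induction is the device that replaces it.
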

\begin{proof}
The proof is similar to the proof of Proposition \ref{smaller}. We induct on the height of $A$. The right vertical homomorphism in
Lemma \ref{com1} is isomorphic to the direct sum of the two homomorphisms $H^{d-1}(F(S^A))\to H^{d-1}(F(S^A\times T^B))$ and
$H^{d-1}(F(S^B))\to H^{d-1}(F(T^A\times S^B))$. Both homomorphisms are injective by \cite[Proposition A.10]{GMS}. It follows from
Lemma \ref{com1} that we may assume that $A=L$ is a field.

The case $L=F$ follows from Lemma \ref{simmm}, so we may assume that $L\neq F$. The homomorphism $\beta^L$ is a direct summand of
$\beta^{L\tens L}$. The latter is injective by the induction hypothesis, hence so is $\beta^L$.
\end{proof}

\begin{lemma}\label{com3}
The composition $\beta^A\circ\alpha^A$ is the identity.
\end{lemma}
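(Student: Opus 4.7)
The plan is to mimic the inductive dévissage used in the proofs of Proposition \ref{smaller} and Lemma \ref{com2}. First reduce to the case $A = L$ a field, dispose of $L=F$ using Lemma \ref{simmm}, then induct on $[L:F]$ by passing to the $L$-algebra $L\tens_F L$.

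\textbf{Step 1 (reduction to fields).} For $A = A_1\times A_2$, the defining formula for $\alpha^A$ shows at once that it is the direct sum $\alpha^{A_1}\oplus \alpha^{A_2}$ under $H^{d-1}(A) = H^{d-1}(A_1)\oplus H^{d-1}(A_2)$ and $\thInv^d(T^A) = \thInv^d(T^{A_1})\oplus \thInv^d(T^{A_2})$, while Lemma \ref{com1} supplies the analogous splitting of $\beta^A$. Hence $\beta^A\circ \alpha^A$ decomposes as a direct sum, and it suffices to verify the claim when $A = L$ is a field.

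\textbf{Step 2 (base case).} When $L = F$, Lemma \ref{simmm} gives $\beta^F\circ \alpha^F = \id$ directly.

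\textbf{Step 3 (induction on height).} For $L\neq F$, I would induct on $[L:F]$. The multiplication map $L\tens_F L\to L$ is an $L$-algebra split surjection (split by $b\mapsto 1\tens b$), giving an $L$-algebra decomposition $L\tens_F L\simeq L\times L'$ with $\dim_L L' = [L:F]-1$. In particular the height of $L\tens_F L$ as an $L$-algebra is strictly less than the height of $L$ as an $F$-algebra, so by the induction hypothesis applied over $L$,
\[
\beta_L^{L\tens_F L}\circ \alpha_L^{L\tens_F L} = \id_{H^{d-1}(L\tens_F L)}.
\]
The base-change identification $T^L\times_F L\simeq T^{L\tens_F L}$ and the factorization of the norm $N_{L\tens_F K/K}$ along the product decomposition of $L\tens_F K$ (for $K\supseteq L$) together exhibit $\beta_F^L\circ \alpha_F^L$ as the direct summand of $\beta_L^{L\tens_F L}\circ \alpha_L^{L\tens_F L}$ corresponding to the canonical inclusion $H^{d-1}(L)\hookrightarrow H^{d-1}(L\tens_F L)$ as the diagonal $L$-factor. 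This is exactly the direct-summand argument already used in Proposition \ref{smaller} and Lemma \ref{com2}, and it yields $\beta^L\circ \alpha^L = \id$.

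\textbf{Main obstacle.} The serious work lies entirely in Step 3, and is essentially bookkeeping: one must verify that the defining formula of $\alpha^L$ (a norm-twisted cup product over $F$-fields) and the residue map defining $\beta^L$ restrict along $F\to L$ to the corresponding constructions for the $L$-algebra $L\tens_F L$, so that $\alpha^L$ and $\beta^L$ genuinely arise as direct summands of $\alpha^{L\tens_F L}$ and $\beta^{L\tens_F L}$. This reduces to checking that $N_{L\tens_F K/K}$ factors as a product of norms along $L\tens_F K\simeq K\times (L'\tens_L K)$ and that the irreducible components of $S^{L\tens_F L}$ match the residues of $v_1,\dots,v_s$ via the base-change $F\to L$. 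These compatibilities are formal consequences of the setup of Section 3, and once recorded the proof reduces to a direct transcription of the pattern of Proposition \ref{smaller} and Lemma \ref{com2}.
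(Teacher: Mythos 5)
Your proposal is correct and follows essentially the same route as the paper: reduce to the case of a field via Lemma \ref{com1}, handle $L=F$ by Lemma \ref{simmm}, and then induct on the height by realizing $\alpha^L$ and $\beta^L$ as direct summands of $\alpha^{L\tens L}$ and $\beta^{L\tens L}$ over $L$, exactly as in Proposition \ref{smaller} and Lemma \ref{com2}. The compatibility bookkeeping you flag is left implicit in the paper as well, so nothing essential is missing.
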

\begin{proof}
We again induct by the height of $A$. By Lemma \ref{com1} that we may assume that $A=L$ is a field.

The case $L=F$ follows from Lemma \ref{simmm}, so we may assume that $L\neq F$. The homomorphisms $\alpha^L$ and $\beta^L$ are direct summands of
$\alpha^{L\tens L}$ and $\beta^{L\tens L}$ respectively. The composition $\beta^{L\tens L}\circ\alpha^{L\tens L}$ is the identity by the induction hypothesis,
hence $\beta^A\circ\alpha^A$ is also the identity.
\end{proof}

It follows from Lemma \ref{com2} and Lemma \ref{com3} that $\alpha^A$ and $\beta^A$ are isomorphisms inverse to each other.
This completes the proof of Theorem \ref{invariants}.

%%%%%%%%%%%%%%%%%%%%%%%%%%%%%%%%%%%%%%%%%%%%%%%%%%%%%%%%%%%%%%%%%%%%%
\section{Invariants of groups of multiplicative type}

In this section, $C$ denotes a group of multiplicative type over $F$ such that there exists an exact sequence
\[
1 \to C \to T \to S \to 1
\]
such that $S$ and $T$ are quasi-trivial tori.  For example, this holds if $C$ is the center of a simply connected semisimple group $G$ over $F$, such as $\mu_n$. In that case, $C$ is isomorphic to the center of the quasi-split inner form $G^q$ of $G$, and we take $T$ to be any quasi-trivial maximal torus in $G^q$.  Then $T^*$ is the weight lattice $\Lambda_w$ and $S^* \simeq \Lambda_r$, where the Galois action permutes the fundamental weights and simple roots respectively.

\begin{proposition}\label{invh1}
Every invariant in $\thInv^3(H^1(C))$ is given by the cup product via the pairing \eqref{fp1} with a
unique element in $H^2(F,C^\circ)$.
\end{proposition}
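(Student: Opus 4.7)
The plan is to reduce the statement to Theorem \ref{invariants} by transporting the invariant $u$ from $H^1(C)$ to the torus $S$. Since $T$ is quasi-trivial, $H^1(K,T)=0$ by Hilbert 90 for every field extension $K/F$, so the long exact cohomology sequence produces a surjection $\varphi_K\colon S(K)\twoheadrightarrow H^1(K,C)$. Given $u\in\thInv^3(H^1(C))$, the composition $v := u\circ\varphi$ is a group-valued invariant $v\in\hInv^3(S)$, and because $\varphi_{F_\sep}$ remains surjective while $u_{F_\sep}=0$, we in fact have $v\in\thInv^3(S)$.

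The quasi-trivial torus $S$ equals $T^A$ for some \'etale $F$-algebra $A$, so Theorem \ref{invariants}, reformulated via Remark \ref{restate}, produces a unique element $b\in H^2(F,S^\circ)$ such that $v_K(s) = s\cup b_K$ under the pairing \eqref{fp2} for every $K/F$ and $s\in S(K)$.

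The crux is to show that $b\in\Im\psi$, where $\psi\colon H^2(F,C^\circ)\to H^2(F,S^\circ)$ comes from the dual exact sequence \eqref{dual.seq}. By exactness this is equivalent to $\rho^*(b)=0$ in $H^2(F,T^\circ)$, where $\rho\colon T\to S$. For any $K/F$ and any $t\in T(K)$, the projection formula (the evident analogue of Lemma \ref{1} for tori, proved identically from the dual pairing $T^*\tens T_*\to\Z$) gives
\[
t\cup\rho^*(b)_K \;=\; \rho_K(t)\cup b_K \;=\; v_K(\rho_K(t)) \;=\; u_K\bigl(\varphi_K(\rho_K(t))\bigr) \;=\; 0,
\]
since $\varphi\circ\rho=0$. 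Thus the invariant $t\mapsto t\cup\rho^*(b)_K$ of $T=T^B$ vanishes on every extension; by Remark \ref{restate} this invariant is precisely $\alpha^B(\rho^*(b))$, so the injectivity clause of Theorem \ref{invariants} forces $\rho^*(b)=0$. Hence $b=\psi(a)$ for some $a\in H^2(F,C^\circ)$, and $a$ is unique because $T^\circ$ is again quasi-trivial (its character module $T_*$ is a permutation lattice), so $H^1(F,T^\circ)=0$ by Hilbert 90 and $\psi$ is injective.

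Finally, Proposition \ref{commute} applied to this $a$ yields, for any $s\in S(K)$,
\[
\varphi_K(s)\cup a_K \;=\; s\cup\psi(a)_K \;=\; s\cup b_K \;=\; v_K(s) \;=\; u_K(\varphi_K(s)),
\]
and surjectivity of $\varphi_K$ then gives $u_K(c)=c\cup a_K$ for every $c\in H^1(K,C)$, proving existence. Uniqueness of $a$ follows from uniqueness of $b$ together with injectivity of $\psi$. The main obstacle is the third step, which requires both an \emph{intertwining} projection formula relating the pairings \eqref{fp1} and \eqref{fp2} and a second invocation of the injectivity half of Theorem \ref{invariants}, now applied to the torus $T$ rather than to $S$.
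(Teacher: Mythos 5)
Your proposal is correct and is essentially the paper's own argument: the paper sets up the commutative diagram with exact top row $H^2(F,C^\circ)\hookrightarrow H^2(F,S^\circ)\to H^2(F,T^\circ)$ (exact since $H^1(K,T^\circ)=1$), bottom row $\thInv^3(H^1(C))\hookrightarrow\thInv^3(S)\to\thInv^3(T)$ (using surjectivity of $S(K)\to H^1(K,C)$), vertical cup products with the middle and right maps isomorphisms by Theorem \ref{invariants} and Remark \ref{restate}, and commutativity by Proposition \ref{commute}, then concludes by a diagram chase. Your write-up simply carries out that chase explicitly (including the torus-level projection formula giving commutativity of the right-hand square), so it matches the paper's proof in substance.
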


\begin{proof}
Since $H^1(K, T) = 1$ for every $K$, the connecting homomorphism $S(K) \to H^1(K, C)$ is surjective for every $K$ and therefore the natural homomorphism
\[
\hInv^3(H^1(C)) \to \hInv^3(S)
\]
is injective.

Consider the diagram
\[
\xymatrix{
H^2(F,C^\circ)\ar[d]\ar@{^{(}->}[r] & H^2(F,S^\circ)\ar[d]^{\wr}\ar[r] & H^2(F,T^\circ) \ar[d]^{\wr}\\
\thInv^3(H^1(C)) \ar@{^{(}->}[r] & \thInv^3(S) \ar[r] & \thInv^3(T),
 }
\]
where the vertical homomorphisms are given by cup products and the top row comes from the exact sequence \eqref{dual.seq}; it is exact since $H^1(K, T^\circ) = 1$ for every field extension $K/F$.  The bottom row comes from applying $\thInv^3$ to the sequence $T(K) \to S(K) \to H^1(K, C)$; it is a complex.   The vertical arrows are cup products, and the middle and right ones are isomorphisms by Theorem \ref{invariants} and Remark \ref{restate}.
The diagram commutes by Proposition \ref{commute}.  By diagram chase, the left vertical map is an isomorphism.
\end{proof}

Note that the group $H^2(F,T)$ is a direct sum of the Brauer groups of finite extensions of $F$. Therefore, we have the following, a coarser version of   \cite[Prop.~7]{G:outer}:

\begin{lemma}\label{inj}
The homomorphism $H^2(F,C)\to\coprod \Br(K)$, where the direct sum is taken over all field extensions $K/F$ and all characters
of $C$ over $K$, is injective.
\end{lemma}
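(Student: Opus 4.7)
The plan is to use the exact sequence $1 \to C \to T \to S \to 1$ fixed at the start of this section, with $T$ and $S$ quasi-trivial.  Since $T$ is quasi-trivial, $H^1(F,T)=0$, so the map $H^2(F,C) \to H^2(F,T)$ is injective.  Writing $T = \prod_{i=1}^s R_{K_i/F}\gm$ and using Shapiro's lemma gives an identification $H^2(F,T) \simeq \bigoplus_{i=1}^s \Br(K_i)$.  It therefore suffices to show that for each $i$, the $i$-th component projection $H^2(F,C) \to \Br(K_i)$ factors as $(\chi_i)_* \circ \res_{K_i/F}$ for some character $\chi_i$ of $C$ over $K_i$.  Granting this, if $x \in H^2(F,C)$ is killed by every pair $(K,\chi)$, then in particular $(\chi_i)_*(\res_{K_i/F}(x))=0$ for every $i$, so the image of $x$ in $\bigoplus_i \Br(K_i)$ vanishes, forcing $x=0$.

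To produce $\chi_i$, I would work over $K_i$: after base change, $R_{K_i/F}\gm \times_F K_i = R_{K_i \otimes_F K_i/K_i}\gm$, and the étale $K_i$-algebra $K_i \otimes_F K_i$ has a distinguished $K_i$-factor given by the multiplication map $a\otimes b \mapsto ab$.  Projecting $T_{K_i}$ onto the $\gm$ corresponding to this diagonal factor (and killing the remaining factors of $T_{K_i}$) defines a character $\tilde\chi_i \in (T^*)^{\Gamma_{K_i}}$.  An unwinding of the adjunction between $R_{K_i/F}$ and base change shows that $(\tilde\chi_i)_* \circ \res_{K_i/F}\colon H^2(F,T) \to \Br(K_i)$ is precisely the $i$-th component projection under the Shapiro identification.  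Setting $\chi_i := \tilde\chi_i|_{C_{K_i}} \in (C^*)^{\Gamma_{K_i}}$ and using naturality of $C \hookrightarrow T$, the composition $H^2(F,C) \hookrightarrow H^2(F,T) \to \Br(K_i)$ agrees with $x \mapsto (\chi_i)_*(\res_{K_i/F}(x))$, completing the reduction.

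The argument is mostly bookkeeping; the only point requiring real care is the identification of the Shapiro projection with the explicit character $\tilde\chi_i$ built from the diagonal factor of $K_i \otimes_F K_i$.  This is the step I would write out most carefully, but it is a direct calculation and no deeper input beyond Shapiro's lemma and the hypothesis on $T$ is needed.
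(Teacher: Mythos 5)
Your argument is essentially the paper's: the paper disposes of this lemma in one line by observing that $H^2(F,T)$ is a direct sum of Brauer groups of finite separable extensions of $F$, and your proposal simply fills in the two implicit steps — the embedding $H^2(F,C)\hookrightarrow H^2(F,T)$ and the identification of each Shapiro component with evaluation at a character of $C$ over $K_i$. Your treatment of the second step is correct: the Shapiro isomorphism for $R_{K_i/F}\gm$ is indeed restriction to $K_i$ followed by projection onto the diagonal factor of $K_i\otimes_F K_i$, which is exactly the mechanism used in the proof of Lemma \ref{compac}, and restricting that character to $C$ gives the required pair $(K_i,\chi_i)$.

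One step is misjustified, though the conclusion survives. You write that $H^1(F,T)=0$ forces $H^2(F,C)\to H^2(F,T)$ to be injective; that inference is not valid. In the long exact sequence of fppf cohomology attached to $1\to C\to T\to S\to 1$,
\[
\cdots \to H^1(F,T)\to H^1(F,S)\xrightarrow{\ \delta\ } H^2(F,C)\to H^2(F,T)\to\cdots,
\]
the kernel of $H^2(F,C)\to H^2(F,T)$ is the image of the connecting map $\delta$, so what you need is the vanishing of $H^1(F,S)$, not of $H^1(F,T)$. Since $S$ is also assumed quasi-trivial, $H^1(F,S)=0$ by Shapiro and Hilbert 90, so the injectivity you want does hold — but cite the correct term. (Also recall that, per the paper's conventions, these are fppf cohomology groups and the sequence is exact as a sequence of fppf sheaves, which is what legitimizes the long exact sequence when $C$ is not smooth.) With that repair, your proof is complete and matches the paper's intended argument.
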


\begin{remark}
The group $G$ becomes quasi-split over the function field $F(X)$ of the variety $X$ of Borel subgroups of $G$, so $F(X)$ kills $t_G$.  But the kernel of $H^2(F, C) \to H^2(F(X), C)$ need not be generated by $t_G$, as can be seen by taking $G$ of inner type $D_n$ for $n$ divisible by 4.
\end{remark}

%%%%%%%%%%%%%%%%%%%%%%%%%%%%%%%%%%%%%%%%%%%%%%%%%%%%%%%%%%%%%%%%
\section{Root system preliminaries}\label{root}

\subsection{Notation}
Let $V$ be a real vector space and $R\subset V$ a root system (which we assume is reduced). Write $\Lambda_r\subset \Lambda_w$ for the \emph{root} and \emph{weight lattices} respectively.
For every root $\alpha\in R$, the reflection $s_\alpha$ with respect to $\alpha$ is given by the formula
\begin{equation}\label{ref}
s_\alpha(x)=x-\alpha^\vee(x) \cdot \alpha,
\end{equation}
for every $x\in V$, where $\alpha^\vee\in V^*:=\Hom_{\R}(V,\R)$ is the \emph{coroot} dual to $\alpha$. Write $R^\vee\subset V^*$ for the dual root system and $\Lambda_r^\vee\subset \Lambda_w^\vee$
for the corresponding lattices. We have
\[
\Lambda_r^\vee=(\Lambda_w)^*:=\Hom(\Lambda_w,\Z) \quad\text{and}  \quad\Lambda_w^\vee=(\Lambda_r)^*.
\]

The \emph{Weyl group} $W$ of $R$ is a normal subgroup of the automorphism group $\Aut(R)$ of the root system $R$.
The factor group $\Aut(R)/W$ is isomorphic to the automorphism group $\Aut(\Dyn(R))$ of the Dynkin diagram of $R$.
There is a unique $\Aut(R)$-invariant scalar product $(\ ,\ )$ on $V$ normalized so that square-length
$d_{\alpha^\vee}:=(\alpha,\alpha)$ of short roots in every irreducible component of $R$ is equal to $1$.
The formula \eqref{ref} yields an equality
\[
\alpha^\vee(x)=\frac{2(\alpha, x)}{(\alpha,\alpha)}
\]
for all $x\in V$ and $\alpha\in R$.

We may repeat this construction with the dual root system $R^\vee$, defining $(,)^\vee$ on $V^*$ so that the square-length $d_\alpha := (\alpha^\vee, \alpha^\vee)^\vee$ is 1 for short coroots $\alpha^\vee$ (equivalently, long roots $\alpha$).

\subsection{The map $\varphi$}

\begin{proposition}\label{phi}  
There is a unique $\R$-linear map $\varphi \!: V^* \to V$ such that $\varphi(\alpha^\vee) = \alpha$ for all short $\alpha^\vee$.  Furthermore, $\varphi$ is $\Aut(R)$-invariant, $\varphi(\alpha^\vee) = d_\alpha \alpha$ for all $\alpha^\vee \in R^\vee$, $\varphi(\Lambda_w^\vee) \subseteq \Lambda_w$, and $\varphi(\Lambda_r^\vee) \subseteq \Lambda_r$.  Analogous statements hold for $\varphi^\vee \!: V \to V^*$.  If $R$ is irreducible, then $\varphi \varphi^\vee \!: V^* \to V^*$ and $\varphi^\vee \varphi \!: V \to V$ are multiplication by $d_\alpha$ for $\alpha$ a short root.
%There is a unique $\Aut(R)$-equivariant linear isomorphism $\varphi: V\to V^\vee$ such that
%$\varphi(\alpha)=d_\alpha\cdot\alpha^\vee$ for every $\alpha\in R$.
%Moreover, $\varphi(\Lambda_r)\subset \Lambda_r^\vee$ and $\varphi(\Lambda_w)\subset \Lambda_w^\vee$.
\end{proposition}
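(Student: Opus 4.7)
The plan is to reduce to the irreducible case and construct $\varphi$ using the $\Aut(R)$-invariant inner product $(\ ,\ )$ on $V$. Since $R$ decomposes as an orthogonal sum of irreducibles and every assertion in the proposition is compatible with such a decomposition, I would first treat irreducible $R$; the general case follows by assembling the pieces, noting that $\Aut(R)$ permutes components of the same type, which preserves the universal scalar that appears below.

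For irreducible $R$, the short coroots form a single $W$-orbit, so their $\R$-linear span is a nonzero $W$-invariant subspace of $V^*$; by irreducibility this span equals $V^*$, which gives uniqueness of $\varphi$. For existence, let $\psi \!: V^* \to V$ be the isomorphism induced by $(\ ,\ )$, characterized by $(\psi(x^*),y) = x^*(y)$, so that $\psi(\alpha^\vee) = 2\alpha/(\alpha,\alpha)$. Set $k := d_\alpha$ for any short root $\alpha$ (a well-defined constant) and define $\varphi := (k/2)\psi$. A short check using the normalizations of $(\ ,\ )$ and $(\ ,\ )^\vee$ shows that $d_\alpha = k/(\alpha,\alpha)$ for every $\alpha \in R$; substituting this into the definition gives $\varphi(\alpha^\vee) = d_\alpha\alpha$, which recovers $\varphi(\alpha^\vee) = \alpha$ whenever $\alpha^\vee$ is short (so $\alpha$ is long and $(\alpha,\alpha) = k$).

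The $\Aut(R)$-invariance of $\varphi$ is then inherited from that of $(\ ,\ )$. For the lattice inclusions, $\varphi(\alpha_i^\vee) = d_{\alpha_i}\alpha_i$ lies in $\Z\alpha_i$, giving $\varphi(\Lambda_r^\vee) \subseteq \Lambda_r$; and for the weight lattice, the biorthogonality $(\omega_k,\alpha_j) = (\alpha_j,\alpha_j)\delta_{jk}/2$ (itself read off from $\alpha_j^\vee(\omega_k) = \delta_{jk}$) combined with the defining property $\omega_i^\vee(\alpha_j) = \delta_{ij}$ of the fundamental coweights forces $\psi(\omega_i^\vee) = (2/(\alpha_i,\alpha_i))\omega_i$, whence $\varphi(\omega_i^\vee) = d_{\alpha_i}\omega_i \in \Z\omega_i \subseteq \Lambda_w$. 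Applying the identical construction to the dual root system $R^\vee \subset V^*$ (using $R^{\vee\vee} = R$) yields the analogous assertions for $\varphi^\vee$.

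Finally, for the composition in the irreducible case, I would evaluate $\varphi\varphi^\vee$ on roots: the analog of the formula for $\varphi^\vee$ reads $\varphi^\vee(\alpha) = d_{\alpha^\vee}\alpha^\vee$ with $d_{\alpha^\vee} = (\alpha,\alpha)$, so $\varphi\varphi^\vee(\alpha) = d_{\alpha^\vee}d_\alpha\,\alpha = (\alpha,\alpha)\cdot(k/(\alpha,\alpha))\,\alpha = k\alpha$; since roots span $V$, this gives $\varphi\varphi^\vee = k\cdot\id$, and $\varphi^\vee\varphi = k\cdot\id$ follows by symmetry. The only real obstacle I anticipate is notational: keeping straight the two inner products and the way the short/long dichotomy swaps roles under the duality between $R$ and $R^\vee$. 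Organizing everything through $\psi$ and the clean identity $d_\alpha d_{\alpha^\vee} = k$ should keep the bookkeeping under control.
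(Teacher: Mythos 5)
Your proposal is correct and takes essentially the same approach as the paper: both build $\varphi$ out of the $\Aut(R)$-invariant scalar products and verify $\varphi(\alpha^\vee)=d_\alpha\alpha$, the lattice containments, and the composition $\varphi\varphi^\vee = d_\alpha\cdot\id$ (for $\alpha$ short) by direct computation, using the identity $d_\alpha d_{\alpha^\vee}=d_{\mathrm{short}}$. The only minor differences are presentational: you rescale the musical isomorphism of $(\ ,\ )$ by the component-dependent constant $d_\alpha$ (for $\alpha$ short), which forces your preliminary reduction to irreducible components, whereas the paper defines $\varphi$ uniformly via the dual form $(\ ,\ )^\vee$; and you check $\varphi(\Lambda_w^\vee)\subseteq\Lambda_w$ on fundamental coweights (in effect proving Example \ref{phi.f} along the way) instead of via the integrality of $2(x,\alpha)$.
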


\begin{proof}
Define $\varphi^\vee$ by $\qform{\varphi^\vee(x), y}=2(x,y)$ for $x, y \in V$ and $\varphi$ by $\qform{x', \varphi(y')} = 2(x', y')^\vee$ for $x', y' \in V^*$.
%In other words, $\varphi=2\psi$,
%where the isomorphism $\psi:V\iso V^*$ is induced by the scalar product.
We have
\[
\qform{\varphi^\vee(\alpha),x}=2(\alpha,x)=(\alpha,\alpha)\cdot\alpha^\vee(x)=d_{\alpha^\vee}\cdot\alpha^\vee(x),
\]
hence $\varphi^\vee(\alpha)=d_{\alpha^\vee}\cdot\alpha^\vee$.  And similarly for $\varphi$.  For uniqueness of $\varphi$ and $\varphi^\vee$, it suffices to note that the short roots generate $V^*$, which is obvious because they generate a subspace that is invariant under the Weyl group.

Let $x\in \Lambda_w$. By definition,
\[
\Z\ni \alpha^\vee(x)=\frac{2(x,\alpha)}{(\alpha,\alpha)}
\]
for all $\alpha\in R$.
It follows that $\qform{\varphi^\vee(x), \alpha} =2(x,\alpha)\in\Z$ since $(\alpha,\alpha)\in\Z$.
Therefore, $\varphi(x)\in\Lambda_w^\vee$.

For each root $\beta \in R$, $\varphi^\vee \varphi(\beta^\vee) = d_{\beta} d_{\beta^\vee} \beta^\vee$ and similarly for $\varphi \varphi^\vee$.  As $R$ is irreducible, either all roots have the same length (in which case $d_\beta d_{\beta^\vee} = 1$) or there are two lengths and $\beta$ and $\beta^\vee$ have different lengths (in which case $d_\beta d_{\beta^\vee}$ is the square-length of a long root); in either case the product equals $d_\alpha$ as claimed.
\end{proof}

\begin{remark}
If the root system $R$ is simply laced, then $\varphi$ gives isomorphisms from $V^*$, $\Lambda_w^\vee$, and $\Lambda_r^\vee$ to $V$, $\Lambda_w$, and $\Lambda_r$ respectively that agree with the canonical bijection $R^\vee \to R$ defined by $\alpha^\vee \leftrightarrow \alpha$.
\end{remark}

\begin{example} \label{phi.f}
For $\alpha^\vee$ a simple coroot, we write $f^\vee_\alpha$ for the corresponding fundamental dominant weight of $R^\vee$.  Consider an element $x' = \sum x_\beta \beta^\vee$ where $\beta$ ranges over the simple roots.  As $(f_\alpha^\vee, \beta^\vee)^\vee = \frac12 \qform{f_\alpha^\vee, \beta}(\beta^\vee, \beta^\vee)^\vee$, we have $(f_\alpha^\vee, x')^\vee = x_\alpha(f_\alpha^\vee, \alpha^\vee)^\vee = \frac12 d_\alpha x_\alpha$.  That is, $\qform{\varphi(f_\alpha^\vee), x'} = d_\alpha x_\alpha = \qform{d_\alpha f_\alpha, x'}$ for all $x'$, and we conclude that $\varphi(f_\alpha^\vee) = d_\alpha f_\alpha$.
\end{example}

\begin{remark}
Let $q\in \cat{S}^2(\Lambda_w)^W$ be the only quadratic form on $\Lambda_r^\vee$ that is equal to $1$ on
every short coroot in every component of $R^\vee$. It is shown in \cite[Lemma 2.1]{Merkurjev13} that the polar form $p$ of $q$ in $\Lambda_w\tens \Lambda_w$
in fact belongs to $\Lambda_r\tens \Lambda_w$. Then the restriction of $\varphi$ on $\Lambda_w^\vee$ coincides with
the composition
\[
\Lambda_w^\vee \xrightarrow{\id \tens p}
\Lambda_w^\vee \tens (\Lambda_r\tens \Lambda_w)=
(\Lambda_w^\vee \tens \Lambda_r)\tens \Lambda_w \to \Lambda_w.
\]
\end{remark}

\subsection{The map $\rho$} \label{rho.def}
Write $\Delta:=\Lambda_w/\Lambda_r$ and $\Delta^\vee:=\Lambda_w^\vee/\Lambda_r^\vee$. Note that $\Delta$ and $\Delta^\vee$ are dual
to each other with respect to the pairing
\[
\Delta\tens \Delta^\vee\to \Q/\Z.
\]

The group $W$ acts trivially on $\Delta$ and $\Delta^\vee$, hence $\Delta$ and $\Delta^\vee$ are $\Aut(\Dyn(R))$-modules.
The homomorphism $\varphi$ yields an $\Aut(\Dyn(R))$-equivariant homomorphism
\[
\rho: \Delta^\vee \to \Delta.
\]

The map $\rho$ is an isomorphism if $R$ is simply laced (because $\varphi$ is an isomorphism) or if $\Lambda_w = \Lambda_r$.
Similarly, $\rho = 0$ iff $\varphi(\Lambda_w^\vee) \subseteq \Lambda_r$, iff $p \in \Lambda_r \tens \Lambda_r$.

\begin{example} \label{rho.C}
Suppose $R$ has type $\cat{C}_n$ for some $n \ge 3$.  Consulting the tables in \cite{Bou:g4}, $f_n^\vee$, the fundamental weight of $R^\vee$ dual to the unique long simple root $\alpha_n$, is the only fundamental weight of $R^\vee$ not in the root lattice.  As $\alpha_n$ is long, $d_{\alpha_n} = 1$, so $\varphi(f^\vee_n) = f_n$, which belongs to $\Lambda_r$ iff $n$ is even.  That is, $\rho = 0$ iff $n$ is even; for $n$ odd, $\rho$ is an isomorphism.
\end{example}

\begin{example} \label{rho.B}
Suppose $R$ has type $\cat{B}_n$ for some $n \ge 2$.  For the unique short simple root $\alpha_n$, $d_{\alpha_n} = 2$, and $\varphi(f_n^\vee) = 2f_n$ is in $\Lambda_r$.  For $1 \le i < n$, $\varphi(f_i^\vee) = f_i \in \Lambda_r$.  We find that $\rho = 0$ regardless of $n$.
\end{example}

Thus we have determined $\ker \rho$ for every irreducible root system.

\begin{example}
Suppose $R$ is irreducible and $\car k = d_\alpha$ for some short root $\alpha$.  Then for $G$, $G^\vee$ simple simply connected with root system $R$, $R^\vee$ respectively, there is a ``very special'' isogeny $\pi \!: G \to G^\vee$.  The restriction of $\pi$ to a maximal torus in $G$ induces a $\Z$-linear map  on the cocharacter lattices $\pi_* \!: \Lambda_r^\vee  \to \Lambda_r$, which, by \cite[Prop.~7.1.5]{CGP2} or \cite[10.1]{St:rep}, equals $\varphi$.

In case $R = \cat{B}_n$,  $\pi$ is the composition of the natural map $G = \Spin_{2n+1} \to \SO_{2n+1}$ with the natural (characteristic 2 only) map $\SO_{2n+1} \to \Sp_{2n}$.  As $\pi$ vanishes on the center of $G$, it follows that $\rho = 0$ as in Example \ref{rho.B}.  Similarly, in case $R = \cat{C}_n$, one can recover Example \ref{rho.C} by noting that the composition $\pi \!: G = \Sp_{2n} \to \Spin_{2n+1}$ with the spin representation $\Spin_{2n+1} \injects \GL_{2^n}$ is the irreducible representation of $G$ with highest weight $f_n$ by \cite[\S11]{St:rep}.
\end{example}

\begin{example} \label{rho.A}
For $R = \cat{A}_{n-1}$, define $\psi \!: \Delta \iso \Z/n\Z$ via $\psi(f_1) = 1/n \in \Q/\Z$.  As $\qform{f_1, f_1^\vee} = (n-1)/n \in \Q$, defining $\psi^\vee \!: \Delta^\vee \iso \Z/n\Z$ via $\psi^\vee(f_1) = -1/n \in \Q/\Z$ gives a commutative diagram
\[
\xymatrix{\Delta \ot \Delta^\vee \ar[r]^{\qform{\ , \ }} \ar[d]_{\psi \ot \psi^\vee}^{\wr}& \Q/\Z \\
\Z/n\Z \ot \Z/n\Z \ar[ur]_{\mathrm{natural}}}
\]
i.e., $\psi^\vee$ is the isomorphism induced by $\psi$ and the natural pairings.  Furthermore, although $\rho$ is induced by the canonical isomorphism $R^\vee \simeq R$, the previous discussion shows that the diagram
\begin{equation} \label{rho.minus}
\xymatrix{\Delta^\vee \ar[r]^\rho \ar[d]_{\psi^\vee}^{\wr} & \Delta \ar[d]_{\psi}^{\wr} \\
\Z/n\Z \ar[r]^{-1} & \Z/n\Z}
\end{equation}
commutes, where the bottom arrow is multiplication by $-1$.

(The action of $\Aut(R)$ on $\Delta$ interchanges $f_1$ and $f_{n-1}$.  Defining instead $\psi(-f_1) = \psi(f_{n-1}) = 1/n$, also gives the same commutative diagram \eqref{rho.minus}.  That is, the commutativity of \eqref{rho.minus} is invariant under $\Aut(R)$.)
\end{example}

%%%%%%%%%%%%%%%%%%%%%%%%%%%%%%%%%%%%%%%%%%%%%%%%%%%%%%%%%%%%
\section{Statement of the main result}

\subsection{The map $\rho$}
Let $G$ be a simply connected semisimple group with root system $R$.
Let $C$ be the center of $G$. Then $C^*=\Lambda_w/\Lambda_r=\Delta$ and $C_*=\Lambda_w^\vee/\Lambda_r^\vee=\Delta^\vee$,
and we get from \S\ref{rho.def} a
homomorphism
\[
\rho=\rho_G: C_*\to C^*
\]
of Galois modules.
Therefore, we have a group homomorphism
\[
\hat\rho=\hat\rho_G: C \to C^\circ.
\]
Note that $\hat\rho$ is an isomorphism if $R$ is simply laced.

\subsection{The Tits class}
Let $G$ be a simply connected group over $F$ with center $C$.
Write $t_{G}$ for the \emph{Tits class} $t_{G}\in H^2(F,C)$. By definition, $t_G=-\partial(\xi_G)$, where
\[
\partial:H^1(F,G/C) \to H^2(F,C)
\]
is the connecting map for the exact sequence $1\to C \to G \to G/C \to 1$ and $\xi_G \in H^1(F,G/C)$ is the unique
 class such that the twisted group ${}^\xi G$ is quasi-split.

\subsection{Rost invariant for an absolutely simple group} \label{charp.sign}
Let $G$ be a simply connected group over $F$.
Recall (see \cite{GMS}) that, for $G$ absolutely simple, Rost defined an invariant $r_G \in \Inv^3(H^1(G))$ called the \emph{Rost invariant}, i.e., a map
\[
r_G: H^1(F,G)\to H^3(F,\Q/\Z(2))
\]
that is functorial in $F$.

\begin{lemma} \label{org}
If $G$ is an absolutely simple and simply connected algebraic group, then $o(r_G) \cdot t_G = 0$.
\end{lemma}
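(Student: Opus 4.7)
My plan is to combine Lemma \ref{inj} with case-by-case information about the Tits algebras of $G$ and the order of the Rost invariant.

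First, I would apply Lemma \ref{inj}: the map $H^2(F, C) \to \coprod \Br(K)$ summing over pairs $(K/F, \chi)$ with $\chi \colon C_K \to \Gm$ a character is injective, so it suffices to show $o(r_G) \cdot \chi_*(t_G) = 0$ in $\Br(K)$ for every such pair. The class $\chi_*(t_G)$ is the classical Tits algebra of a representation of $G_K$ with central character $\chi$, whose exponent is well-understood from the Killing-Cartan classification.

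Next, I would split into cases. For types $E_8, F_4, G_2$, the center $C$ is trivial and $t_G = 0$. For types $B_n, C_n, E_7$, $C = \mu_2$, so $H^2(K, C)$ is $2$-torsion; since $o(r_G) \in \{2, 12\}$ is even, the conclusion is automatic. For the remaining types---inner and outer $A_n$, $D_n$, and $E_6$---one checks from the classical tables that $\chi_*(t_G)$ has exponent dividing $n+1$, $2$, and $3$ respectively, each of which divides the corresponding value of $o(r_G)$ (namely $n+1$, $2$, and $6$).

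The main obstacle is avoiding circularity with the main Theorem \ref{main}, which would otherwise yield the lemma in a single line: combining $o(r_G) \cdot r_G = 0$ with the identification $r_G \circ i_* = -t_G^\circ \cup (-)$ gives $o(r_G) \cdot t_G^\circ = 0$, and for simply-laced types $\hat\rho^*$ is an isomorphism. Since Lemma \ref{org} is stated in advance of Theorem \ref{main} and is presumably needed later, the case-by-case route above provides an unconditional proof. A partial alternative is to use Proposition \ref{invh1}: the restriction $r_G \circ i_*$ lies in $\hInv^3(H^1(C))$ and is killed by $o(r_G)$, so the corresponding class $\tau \in H^2(F, C^\circ)$ satisfies $o(r_G) \tau = 0$; but identifying $\tau$ with $-\hat\rho^*(t_G)$ requires the substance of Theorem \ref{main}, so this alternative is not logically independent.
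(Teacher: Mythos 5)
Your overall strategy is the same as the paper's: Lemma \ref{org} is proved there in one line by citing the case-by-case computation of $o(r_G)$ in \cite{GMS} and observing that in each case it is a multiple of the order of $t_G$; your preliminary reduction via Lemma \ref{inj} to Tits algebras is harmless but adds nothing. The genuine gap is in your numerology at the one place where it matters, namely type $\cat{D}_n$. The center is $\mu_2\times\mu_2$ only for $n$ even; for $n$ odd it is (a possibly twisted form of) $\mu_4$, the relevant characters have order $4$, and the associated Tits algebras are the components $C^{\pm}$ of the Clifford algebra, whose exponent can be $4$, not $2$. At the same time $o(r_G)$ for type $\cat{D}_n$ is not always $2$: by \cite{GMS} it is $2$ or $4$ depending on the form, and it equals $4$ precisely in the cases that produce order-$4$ Tits classes. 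So both of your claims for type $\cat{D}$ (``exponent dividing $2$'' and ``$o(r_G)=2$'') are false, and the fact that these two form-dependent quantities always line up is exactly the content one must extract from the tables in \cite{GMS} --- i.e., the substance of the lemma, which your argument asserts rather than proves.

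A secondary inaccuracy of the same kind: $o(r_G)$ depends on the particular form of $G$, not only on its Killing--Cartan type, so statements such as ``$o(r_G)=n+1$ for type $\cat{A}_n$'' or ``$o(r_G)\in\{2,12\}$ for $\cat{B}_n,\cat{C}_n,\cat{E}_7$'' are not correct (for the split group $\Sp_{2n}$ one has $o(r_G)=1$; for $\gSL_m(D)$ the order is governed by the index of $D$). In those cases the divisibility conclusion happens to survive, since the Tits class is then trivial or of exponent dividing the true order, but the pattern is the same: the comparison must be made with the actual, form-dependent orders computed in \cite{GMS}, which is precisely what the paper's one-line proof invokes. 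Your remarks about avoiding circularity with Theorem \ref{main} are sound, but they do not repair the case analysis.
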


\begin{proof}
The order $o(r_G)$ of $r_G$ is calculated in \cite{GMS}, and in each case it is a multiple of the order of $t_G$.
\end{proof}

As mentioned in \cite[\S 2.3]{Gille00}, there are several definitions of the Rost invariant that may differ
by a sign. In \cite{GQ11}, Gille and Qu\'eguiner proved that for the definition of the Rost invariant $r_G$  they choose,
in the case $G=\gSL_1(A)$ for a central simple algebra $A$ of degree $n$ over $F$, the value of $r_G$ on the image of the class
$a F^{\times n}\in F^\times/F^{\times n}=H^1(F,\mu_n)$ in $H^1(F,G)$ is equal to $(a)\cup[A]$ if $n$ is not divisible
by $\ch(F)$ and to $-(a)\cup[A]$ if $n$ is a power of $p=\ch(F)>0$. We propose to normalize the Rost invariant
by multiplying the $p$-primary component of the Rost invariant (of all groups) by $-1$ in the case $p=\ch(F)>0$.
Thus the formula $r_G(a F^{\times n})=(a)\cup[A]$ holds now in all cases.

\subsection{The main theorem}
For $G$ semisimple and simply connected over $F$, there is an isomorphism 
\begin{equation} \label{G.sc}
\psi\!: G \iso \prod_{i=1}^n R_{F_i/F}(G_i), 
\end{equation} 
where the $F_i$ are finite separable extensions of $F$ and $G_i$ is an absolutely simple and simply connected $F_i$-group.  The product of the corestrictions of the $r_{G_i}$ (in the sense of \cite[p.~34]{GMS}) is then an invariant of $H^1(G)$, which we also denote by $r_G$ and call the Rost invariant of $G$.  The map $\psi$ identifies the center $C$ of $G$ with $\prod_i R_{F_i/F}(C_i)$ for $C_i$ the center of $G_i$, and the Tits class $t_G \in H^2(F, C)$ with $\sum t_{G_i} \in \sum H^2(F_i, C_i)$.

The composition $r_G\circ i^*$ is a group  homomorphism by \cite[Corollary 1.8]{MPT02} or \cite[Lemma 7.1]{G:rinv}.
That is, the composition $r_G\circ i^*$ in Theorem \ref{main} taken over all field extensions of $F$
can be viewed not only as an invariant of $H^1(C)$, but as an element of $\Inv^3_h(H^1(C)$ as in Definition \ref{inv.def}. Over a separable closure of $F$, the inclusion of $C$ into $G$ factors through
a maximal split torus and hence this invariant is trivial by Hilbert Theorem 90.
By Proposition \ref{invh1} the composition is given by the cup product with a unique element in $H^2(F,C^\circ)$. We will prove Theorem \ref{main}, which says that this element is equal to $-t_G^\circ$.

\subsection{Alternative formulation}
Alternatively, we could formulate the main theorem as follows.  The group of invariants $\Inv^3(H^1(G))$ is a sum of $n$ cyclic groups with generators (the corestrictions of) the $r_{G_i}$'s, and in view of Lemma \ref{org} we may define a homomorphism
\begin{equation} \label{inv.h2}
\Inv^3(H^1(G)) \to H^2(F, C) \quad \text{via $\sum n_i r_{G_i} \mapsto \sum -n_i t_{G_i}$.}
\end{equation}

\begin{theorem} \label{main2}
For every invariant $s \!: H^1(*, G) \to H^3(*, \Q/\Z(2))$, the composition
\[
H^1(*, C) \to H^1(*, G) \to H^3(*, \Q/\Z(2))
\]
equals the cup product with the image of $s$ under the composition  
\[
\Inv^3(H^1(G)) \to H^2(F, C) \to H^2(F, C^\circ).
\]
\end{theorem}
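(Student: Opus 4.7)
The plan is to use the cyclic-group decomposition of $\Inv^3(H^1(G))$ recalled just before the theorem to reduce to the case $s = \cor_{F_i/F}(r_{G_i})$ for a single index $i$, and then apply Theorem \ref{main} to the semisimple simply connected group $G' := R_{F_i/F}(G_i)$, transporting the result back to $G$ via the projection formula of Lemma \ref{1}.

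For each fixed $c \in H^1(F, C)$, both sides of the asserted identity are additive in $s \in \Inv^3(H^1(G))$ (the left side tautologically, the right by bilinearity of the cup product together with additivity of the assignment \eqref{inv.h2}), so it suffices to treat $s = s_i := \cor_{F_i/F}(r_{G_i})$ for each fixed $i$. Let $G' := R_{F_i/F}(G_i)$, with center $C' := R_{F_i/F}(C_i)$, so $C'$ is the $i$-th direct factor of $C$. The natural projection $\pi: G \to G'$ restricts to the projection $\pi: C \to C'$, and by definition $s_i = r_{G'} \circ \pi_*$ as an invariant of $H^1(G)$, while $t_{G'}$ corresponds to $t_{G_i}$ under $H^2(F, C') \simeq H^2(F_i, C_i)$. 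Combining the naturality of the center inclusion with Theorem \ref{main} applied to $G'$,
\[
s_i(i_G^*(c)) \;=\; r_{G'}(i_{G'}^*(\pi_*(c))) \;=\; -\hat\rho_{G'}^*(t_{G'}) \cup \pi_*(c),
\]
and by the projection formula (Lemma \ref{1}) applied to $\pi$, this equals $-\pi^*(\hat\rho_{G'}^*(t_{G'})) \cup c$ in $H^3(F, \Q/\Z(2))$.

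It remains to identify $\pi^*(\hat\rho_{G'}^*(t_{G'}))$ with the image of $s_i$ under the composition $\Inv^3(H^1(G)) \to H^2(F, C) \xrightarrow{\hat\rho^*} H^2(F, C^\circ)$. Under the product decomposition $C = \prod_j R_{F_j/F}(C_j)$, the map $\pi_*$ on $H^2(F, C)$ is projection to the $i$-th summand and the dual map $\pi^*$ on $H^2(F, C^\circ)$ is the corresponding inclusion. Since the root system of $G$ is the disjoint union of those of the $G_i$, the homomorphism $\rho_G$ decomposes componentwise, so $\hat\rho_G^*$ respects the product decomposition, and hence $\pi^*(\hat\rho_{G'}^*(t_{G'})) = \hat\rho_G^*(t_{G_i})$ with $t_{G_i}$ placed in the $i$-th summand of $H^2(F, C)$. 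By the definition \eqref{inv.h2}, $-\hat\rho_G^*(t_{G_i})$ is precisely the image of $s_i$ under the composition in the theorem, completing the argument.

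The main obstacle is the bookkeeping of the last paragraph --- namely, the compatibility of $\pi_*$, the Tits class, and $\hat\rho^*$ with the product decomposition of $C$; but this reduces to the fact that the root system of $G$ is a disjoint union of those of the $G_i$, together with the componentwise description of $\rho_G$ and the naturality of the Tits class along the projection $G \to G'$.
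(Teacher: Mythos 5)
Your argument is correct and is essentially the paper's: the paper simply asserts that Theorem \ref{main2} ``follows immediately'' from Theorem \ref{main}, and your proof is the natural spelling-out of that reduction, using additivity in $s$, Theorem \ref{main} applied to $R_{F_i/F}(G_i)$, the projection formula (Lemma \ref{1}), and the compatibility of $\hat\rho$, the Tits classes, and the dual maps with the product decomposition of $C$ (cf.\ Lemmas \ref{corandcirc} and \ref{cortits}).
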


This will follow immediately from the main theorem, which we will prove over the course of the next few sections.

%%%%%%%%%%%%%%%%%%%%%%%%%%%%%%%%%%%%%%%%%%%%%%%%%%%%%%%%%%%%%%%%%%%%%
\section{Rost invariant of transfers} 

%The \'etale cohomology groups used were defined in \S\ref{cohom}.
The following statement is straightforward.

\begin{lemma}\label{corandcirc}
Let $A$ be an \'etale $F$-algebra and $G$ a simply connected semisimple group scheme over $A$, $C$ the center of $G$. Then $C':=R_{A/F}(C)$
is the center of $G':=R_{A/F}(G)$ and ${C'}^\circ\simeq R_{A/F}(C^\circ)$, and the diagram
\[
\xymatrix{
H^i(A, C)\ar[d]^\wr\ar[r]^{\hat\rho_G^*} & H^i(A, C^\circ)\ar[d]^\wr  \\
H^i(F, C') \ar[r]^{\hat\rho_{G'}^*} & H^i(F, {C'}^\circ)
 }
\]
commutes.
\end{lemma}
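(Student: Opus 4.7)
The plan is to verify the three assertions in turn, each reducing to a compatibility between Weil restriction and duality/Shapiro's lemma. The statement being flagged as ``straightforward'', I expect no serious obstacle; the main content is bookkeeping across the identifications $R_{A/F} \leftrightarrow \mathrm{Ind}_{\Gamma_A}^{\Gamma_F}$ at the level of character modules.

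First, since Weil restriction commutes with the formation of kernels, centralizers, and fiber products of smooth affine $A$-group schemes, $R_{A/F}(\mathrm{Cent}(G)) = \mathrm{Cent}(R_{A/F}(G))$, which gives $C' = \mathrm{Cent}(G')$. For the dual, recall that for a group $M$ of multiplicative type over $A$ with character module $M^*$, the group $R_{A/F}(M)$ is of multiplicative type over $F$ with character module $\mathrm{Ind}_{\Gamma_A}^{\Gamma_F}(M^*)$. Thus $(C')^* = \mathrm{Ind}(C^*)$, and applying $\Hom(-,\Q/\Z)$ gives $(C')_* = \mathrm{Ind}(C_*)$, which is the character module of $R_{A/F}(C^\circ)$. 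Hence ${C'}^\circ \simeq R_{A/F}(C^\circ)$ canonically.

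Next, to identify $\hat\rho_{G'}$ with $R_{A/F}(\hat\rho_G)$, I would work over $F_\sep$. Writing $A \tens_F F_\sep = \prod_\sigma F_\sep$ over the embeddings $\sigma \!: A \injects F_\sep$, the group $G'_{F_\sep}$ decomposes as $\prod_\sigma G^\sigma$, so its root system is the disjoint union of the root systems of the components, and $\Delta^\vee(G') = \bigoplus_\sigma \Delta^\vee(G^\sigma)$, $\Delta(G') = \bigoplus_\sigma \Delta(G^\sigma)$. The map $\rho$ from Section \ref{root} is defined componentwise on irreducible summands of a root system, so $\rho_{G'}$ on $\mathrm{Ind}(C_*) = \bigoplus_\sigma C_*^\sigma$ is the direct sum of the $\rho_{G^\sigma}$. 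At the level of $F$-group schemes this reads $\hat\rho_{G'} = R_{A/F}(\hat\rho_G)$.

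Finally, the Shapiro-type isomorphism $\iota \!: H^i(A,M) \iso H^i(F, R_{A/F}(M))$ for $M$ of multiplicative type (which is the left vertical map in the diagram, applied to $M = C$ and, via Step 2, to $M = C^\circ$) is natural in $M$. Applied to the morphism $\hat\rho_G \!: C \to C^\circ$, naturality yields $\iota \circ \hat\rho_G^* = (R_{A/F}(\hat\rho_G))^* \circ \iota = \hat\rho_{G'}^* \circ \iota$, which is precisely commutativity of the square.
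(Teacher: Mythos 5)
Your argument is correct, and it fills in exactly the routine verification that the paper leaves implicit (the paper offers no proof, simply calling the statement straightforward): identify characters/cocharacters of $R_{A/F}(M)$ with induced modules, check that $\rho$ is defined componentwise on the irreducible factors appearing after base change to $F_\sep$ so that $\hat\rho_{G'} = R_{A/F}(\hat\rho_G)$, and then invoke naturality of the isomorphism $H^i(A,M)\simeq H^i(F,R_{A/F}(M))$ in $M$. The only cosmetic point is that for $A$ a product of fields one should read $\mathrm{Ind}_{\Gamma_A}^{\Gamma_F}$ as the direct sum of inductions over the factors (or reduce to $A$ a field), but this does not affect the argument.
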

\begin{lemma}\label{cortits}
Let $G$ be a simply connected semisimple group scheme over an \'etale $F$-algebra $A$.
Set $C':=R_{A/F}(C)$
and $G':=R_{A/F}(G)$. Then
the image of $t_G$ under the isomorphism $H^2(A, C)\iso H^2(F, C')$ is equal to $t_{G'}$.
\end{lemma}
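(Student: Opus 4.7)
The approach is to show that every ingredient in the definition of the Tits class is compatible with Weil restriction, so that the isomorphism $H^2(A,C)\iso H^2(F,C')$ identifies $-t_G$ with $-t_{G'}$.

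First, I would observe that the Weil restriction functor $R_{A/F}$ is exact on the category of affine group schemes over the \'etale algebra $A$ (for $A=L_1\times\cdots\times L_s$ it is just the product of the ordinary Weil restrictions $R_{L_i/F}$). Applying $R_{A/F}$ to the central extension
\[
1\to C \to G \to G/C \to 1
\]
therefore produces an exact sequence
\[
1\to C' \to G' \to R_{A/F}(G/C)\to 1
\]
over $F$, with $R_{A/F}(G/C)=G'/C'$ and $C'$ central in $G'$. Next I would use the Shapiro-type isomorphisms $H^i(A,-)\iso H^i(F,R_{A/F}(-))$ on the groups $C$ and $G/C$, and the functoriality of connecting maps, to obtain a commutative square
\[
\xymatrix{
H^1(A,G/C) \ar[d]^{\wr}\ar[r]^-{\partial} & H^2(A,C) \ar[d]^{\wr} \\
H^1(F,G'/C') \ar[r]^-{\partial'} & H^2(F,C').
}
\]

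The core step is then to check that the isomorphism on the left sends $\xi_G$ to $\xi_{G'}$. Let $\xi'\in H^1(F,G'/C')$ denote the image of $\xi_G$. Since twisting commutes with Weil restriction (both are just descent constructions that commute with each other because $R_{A/F}$ preserves inner forms, torsors, and the product decomposition over $A\tens_F F_\sep$), one has a canonical isomorphism
\[
{}^{\xi'}G' \;\cong\; R_{A/F}\bigl({}^{\xi_G}G\bigr).
\]
By the definition of $\xi_G$, the group ${}^{\xi_G}G$ is quasi-split over $A$, meaning each factor is quasi-split over the corresponding $L_i$. The Weil restriction of a quasi-split reductive group along a finite separable extension of fields is again quasi-split (its Borel is the Weil restriction of the Borel), so ${}^{\xi'}G'$ is quasi-split over $F$. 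By the uniqueness in the definition of $\xi_{G'}$, we conclude $\xi'=\xi_{G'}$.

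Putting the two steps together, the commutative square sends $-t_G=\partial(\xi_G)$ to $\partial'(\xi_{G'})=-t_{G'}$, hence $t_G\mapsto t_{G'}$, as claimed. I expect the only real subtlety to be the identification ${}^{\xi'}G'\cong R_{A/F}({}^{\xi_G}G)$, which is the compatibility of inner twisting with Weil restriction; everything else is formal exactness and functoriality of connecting homomorphisms.
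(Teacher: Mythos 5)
Your proof is correct and follows essentially the same route as the paper: the paper's entire proof is the one-line observation that the corestriction (Weil restriction) of a quasi-split group is quasi-split, which is exactly your core step, with the formal compatibilities (exactness of $R_{A/F}$, Shapiro isomorphisms commuting with connecting maps, twisting commuting with Weil restriction) left implicit there and spelled out by you.
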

\begin{proof}
The corestriction of a quasi-split group is quasi-split.
\end{proof}

\begin{lemma}\label{reduction}
Let $G$ be a simply connected semisimple algebraic group scheme over an \'etale $F$-algebra $A$.  If Theorem \ref{main}
holds for $G$, then it also holds for $R_{A/F}(G)$.
\end{lemma}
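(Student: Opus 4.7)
The plan is to transfer Theorem \ref{main} from $G$ over $A$ to $G' = R_{A/F}(G)$ over $F$ via the Shapiro-type isomorphisms $\iota \!: H^i(A,C)\iso H^i(F,C')$ and $\iota^\circ \!: H^i(A,C^\circ)\iso H^i(F,{C'}^\circ)$, combined with the projection formula of Lemma \ref{compac} and the compatibilities in Lemmas \ref{corandcirc} and \ref{cortits}.

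First I would unwind the definition of $r_{G'}$. From the discussion surrounding \eqref{G.sc}, the Rost invariant of a restriction of scalars is defined as the corestriction of the Rost invariant of the underlying $A$-group. Concretely, functoriality of restriction of scalars combined with Shapiro's lemma identifies
\[
\bigl[\,H^1(F,C')\to H^1(F,G')\xrightarrow{r_{G'}}H^3(F,\Q/\Z(2))\,\bigr]
\]
with
\[
\bigl[\,H^1(A,C)\to H^1(A,G)\xrightarrow{r_G}H^3(A,\Q/\Z(2))\xrightarrow{N_{A/F}}H^3(F,\Q/\Z(2))\,\bigr].
\]

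Next, for $x\in H^1(F,C')$, set $\tilde{x}:=\iota^{-1}(x)\in H^1(A,C)$. Theorem \ref{main} applied to $G$ gives $r_G(\tilde{x})=-t_G^\circ\cup\tilde{x}$ in $H^3(A,\Q/\Z(2))$. Applying $N_{A/F}$ and invoking Lemma \ref{compac} transports the cup product from $A$ to $F$: up to the trivial Koszul sign $(-1)^{1\cdot2}=1$ (cf.\ Remark \ref{signs}) for swapping the factors in $H^1\otimes H^2$, one has
\[
N_{A/F}(t_G^\circ\cup\tilde{x}) = \iota^\circ(t_G^\circ)\cup x \in H^3(F,\Q/\Z(2)).
\]
Finally, Lemma \ref{corandcirc} yields $\iota^\circ(t_G^\circ)=\iota^\circ(\hat\rho_G^*(t_G))=\hat\rho_{G'}^*(\iota(t_G))$, and Lemma \ref{cortits} gives $\iota(t_G)=t_{G'}$, so $\iota^\circ(t_G^\circ)=\hat\rho_{G'}^*(t_{G'})=t_{G'}^\circ$. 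Putting the steps together shows that the composition $H^1(F,C')\to H^1(F,G')\xrightarrow{r_{G'}}H^3(F,\Q/\Z(2))$ sends $x$ to $-t_{G'}^\circ\cup x$, which is Theorem \ref{main} for $G'$.

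The proof is essentially pure bookkeeping, so there is no serious obstacle; the only point requiring care is the first step---verifying that $r_{G'}$ is genuinely computed as $N_{A/F}\circ r_G$ under the Shapiro identifications. This is built into the definition of the Rost invariant for non-absolutely-simple semisimple groups via \eqref{G.sc}, so no new argument is needed. Everything else follows by applying the projection formulas (Lemmas \ref{compac} and \ref{corandcirc}) and the compatibility of Tits classes under restriction of scalars (Lemma \ref{cortits}) that have already been established in the paper.
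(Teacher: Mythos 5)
Your proposal is correct and follows essentially the same route as the paper: transfer via the Shapiro isomorphisms, apply Theorem \ref{main} over $A$, and conclude with Lemmas \ref{compac}, \ref{corandcirc} and \ref{cortits}. The only cosmetic difference is that the paper justifies the identification $r_{G'}\circ\nu = N_{A/F}\circ r_G$ by citing \cite[Proposition 9.8]{GMS} rather than treating it as purely definitional, which is exactly the ``first step'' you flag as the point requiring care.
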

\begin{proof}
Let $C$ be the center of $G$ and $G':=R_{A/F}(G)$. The group $C':=R_{A/F}(C)$ is the center of $G'$. Let $x\in H^1(A, C)$ and let $x'\in H^1(F, C')$
be the image of $x$ under the isomorphism $\nu:H^1(A, C)\iso H^1(F, C')$.
We have
\begin{align*}
r_{G'}({i'}^*(x')) & = r_{G'}(\nu({i}^*(x)))  \\
& = N_{A/F}(r_{G}({i}^*(x)) \quad\text{by \cite[Proposition 9.8]{GMS}} \\
& = N_{A/F}(-t_G^\circ\cup x) \quad\text{by Theorem \ref{main} for $x$}  \\
& = -t_{G'}^\circ\cup x'  \quad\text{by Lemmas \ref{compac}, \ref{corandcirc}  and \ref{cortits}.}\qedhere
\end{align*}
\end{proof}

If Theorem \ref{main} holds for semisimple groups $G_1$ and $G_2$, then it also holds for the group $G_1 \times G_2$. Combining this with Lemma \ref{reduction}, reduces the proof of Theorem \ref{main} to the case where $G$ is absolutely almost simple.

%%%%%%%%%%%%%%%%%%%%%%%%%%%%%%%%%%%%%%%%%%%%%%%%%%%%%%%%%%%%%%%

\section{Rost invariant for groups of type $\cat{A}$}\label{typa}

In this section, we will prove Theorem \ref{main} for $G$ absolutely simple of type $\cat{A}_{n-1}$ for each $n \ge 2$.

\subsection{Inner type} Suppose $G$ has inner type.  Then there is an isomorphism $\psi \!: G \iso \gSL_1(A)$, where $A$ is a central simple algebra of degree $n$ over $F$.  The map $\psi$ restricts to an isomorphism $C \iso\mu_n$, identifying $C^*$ with $\Z/n\Z$, and induces $\psi^\circ \!: C^\circ \iso \mu_n$.

The connecting homomorphism arising from the Kummer sequence $1 \to \mu_n \to \Gm \to \Gm \to 1$ gives an isomorphism $H^1(K, \mu_n) \simeq K^\times / K^{\times n}$ for every extension $K/F$.  

For each field extension $K/F$, the isomorphism $\psi$ identifies the map $H^1(K, C) \to H^1(K, G)$ with the obvious map
$K^\times/K^{\times n} = H^1(K, \mu_n) \to H^1(K, \gSL_1(A)) = K^\times/\Nrd(A_K^\times)$.  Further, $\psi(t_G) \in H^2(K, \mu_n)$ is the Brauer class $[A]$ of $A$ as in \cite[pp.~378, 426]{Book}.  The commutative diagrams in Example \ref{rho.A} show that, for $c \in H^1(K, C)$, the cup product $-t_G^\circ \cup c$ equals the cup product $-\psi^\vee(t_G^\circ) \cup \psi(c)$, where $-\psi^\vee(t_G^\circ) = \psi(t_G) = [A]$.  That is, 
Theorem \ref{main} claims that the composition
\begin{equation} \label{rost.A}
H^1(K, \mu_n) \xrightarrow{\psi^{-1}} H^1(K, C) \to H^1(K, G) \to H^3(K, \Q/\Z(2))
\end{equation}
equals the cup product with $[A]$.  

Let $p$ be a prime integer and write $m$ for the largest power of $p$ dividing $n$.  As the composition \eqref{rost.A} is a group homomorphism, it suffices to verify Theorem \ref{main} on each $p$-primary component $r_G(x)_p$ of the Rost invariant with values in $\Q_p/\Z_p(2)$.

Suppose first that $p$ does not divide $\ch(F)$. In this case, 
it was determined in \cite[Theorem 1.1]{GQ11} that the $p$-component  of the Rost invariant of $G$
is given by the formula
\[
r_G(x)_p=[A]_p\cup (x) \in H^3(K, \Q_p/\Z_p(2))
\]
for every $x\in K^\times$, where the cup product is taken with respect to the natural pairing
\[
H^2(K, \mu_m)\tens H^1(K, \mu_m) \to H^3(K, \Q_p/\Z_p(2))
\]
induced by $\mu_n\tens\mu_n\to\Q/\Z(2)$
and $[A]_p$ is the $p$-primary component of the class of $A$ in the Brauer group $\Br(K)=H^2(K, \Q/\Z(1))$. %Recall that the Tits class $t_G$ coincides with $[A]$.
Thus, Theorem \ref{main} holds in this case.

Now let $p=\ch(F)>0$. Consider the sheaf $\nu_m(j)$ in the \'etale topology over $F$ defined by
$\nu_m(j)(L)=K_j(L)$. The natural morphisms $\Z(j)\to\nu_m(j)[-j]$ for $j\leq 2$ are consistent with the products,
hence we have a commutative diagram
\[
\xymatrix{
(\Z/m\Z)(1)\tiL (\Z/m\Z)(1) \ar[d]^\wr\ar[r] & (\Z/m\Z)(2) \ar[d]^\wr \\
\nu_m(1)[-1]\tens \nu_m(1)[-1] \ar[r] & \nu_m(2)[-2].
 }
\]
Therefore, we have a commutative diagram
\[
\xymatrix{
H^2(F,\mu_m) \tens H^1(F,\mu_m) \ar[d]^\wr\ar[r] & H^3(F,\Q/\Z(2)) \ar[d]^\wr \\
H^1(F,\nu_m(1))\tens H^0(F,\nu_m(1)) \ar[r] & H^1(F,\nu_m(2))
 }
\]
(see Remark \ref{signs} after Proposition \ref{commute}). The bottom arrow is given by the cup product map
\[
\nBr{m}(F)\tens (F^\times/F^{\times m}) \to H^3(F,\Q/\Z(2))
\]
(see \cite[4D]{GQ11}). It is shown in \cite[Theorem 1.1]{GQ11} that the $p$-component  of the Rost invariant of $G$
is given by the formula
\[
r_G(x)_p= [A]_p\cup (x)=\theta^*([A]_p\cup (x)) \in H^3(K, \Q_p/\Z_p(2))
\]
for every $x\in K^\times$.  (The formula in \cite{GQ11} contains an additional minus sign, but it does not appear here due to the adjustment in the definition of $r_G$ in \S\ref{charp.sign}.)

%%%%%%%
\subsection{Outer type}  Now suppose that $G$ has outer type $\cat{A}_{n-1}$.  There is an isomorphism $\psi \!: G \iso \gSU(B,\tau)$,
where $B$ is a central simple algebra of degree $n$ over a separable quadratic field extension $K/F$ with
an involution $\tau$ of the second kind ($\tau$ restricts to a nontrivial automorphism of $K/F$).  The map $\psi$ identifies $C$ with $\mu_{n,[K]}$, and $C = C^\circ$.

Suppose first that $n$ is odd.
Since $G_K \simeq \gSL_1(B)$, the theorem holds over $K$.  As $K$ has degree 2 over $F$ and $C$ has odd exponent, the natural map $H^1(F, C) \to H^1(K, C)$ is injective, hence the theorem holds over $F$ by the following general lemma.

\begin{lemma}\label{fe}
Let $L_1,L_2,\dots, L_s$ be field extensions of $F$ such that the natural homomorphism $H^2(F,C)\to\prod_i H^2(L_i,C)$ is injective. If Theorem \ref{main}
holds for $G$ over all fields $L_i$, then it also holds over $F$.
\end{lemma}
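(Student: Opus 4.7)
The plan is to apply Proposition \ref{invh1} to the invariant $r_G \circ i^* \in \thInv^3(H^1(C))$ and then exploit functoriality. By Proposition \ref{invh1} there is a unique $\tau_G \in H^2(F, C^\circ)$ with $r_G(i^*(x)) = \tau_G \cup x$ for all $x \in H^1(F, C)$, and the content of Theorem \ref{main} is the equality $\tau_G = -t_G^\circ$ in $H^2(F, C^\circ)$.

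The next step is the observation that the formation of $\tau_G$ commutes with base change. For any extension $L/F$, both the Rost invariant $r_G$ and the inclusion $i\!:C \hookrightarrow G$ are compatible with base change, and the pairing \eqref{fp1} is natural, so the restriction $(\tau_G)_L \in H^2(L, C^\circ)$ is the element furnished by Proposition \ref{invh1} for the invariant $r_{G_L} \circ i_L^*$; this uses the uniqueness clause in that proposition. Since the Tits class satisfies $(t_G)_L = t_{G_L}$ and $\hat\rho^*$ commutes with restriction, we have $(t_G^\circ)_L = t_{G_L}^\circ$. Thus the hypothesis that Theorem \ref{main} holds over each $L_i$ translates to $(\tau_G + t_G^\circ)_{L_i} = 0$ in $H^2(L_i, C^\circ)$ for every $i$.

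To finish, we need the restriction map $H^2(F, C^\circ) \to \prod_i H^2(L_i, C^\circ)$ to be injective. In the setting where this lemma is applied --- outer type $\cat{A}_{n-1}$ with $C = C^\circ = \mu_{n,[K]}$ --- the hypothesized injectivity of $H^2(F, C) \to \prod_i H^2(L_i, C)$ is literally this statement, so $\tau_G + t_G^\circ = 0$ and the lemma follows.

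The main subtle point is the naturality of $\tau_G$ under base change in the second step; once this is in hand, the argument collapses to a single diagram chase. I do not foresee any serious obstacle beyond being careful with the functoriality of Proposition \ref{invh1} and of $\hat\rho$.
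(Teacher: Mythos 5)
Your proposal is correct and is essentially the paper's own argument: by Proposition \ref{invh1} the composition $r_G\circ i^*$ is cup product with a unique class in $H^2(F,C^\circ)$, this class agrees with $-t_G^\circ$ after restriction to each $L_i$ by hypothesis (using the base-change compatibility you spell out), and the injectivity assumption finishes the proof. The only caveat is that the injectivity actually needed is for $H^2(F,C^\circ)\to\prod_i H^2(L_i,C^\circ)$, and the lemma is also invoked later for an arbitrary absolutely almost simple $G$ (over $F(X)$), where $C$ and $C^\circ$ are isomorphic as $F$-group schemes but not literally equal; the paper glosses this identification exactly as you do, so this is not a gap relative to the paper's own proof.
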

\begin{proof}
The left vertical map in Theorem \ref{main} is multiplication by some element $h\in H^2(F,C^\circ)$.
We need to show that $h=-t_G^\circ$. This equality holds over all fields $L_i$, hence it holds over $F$ by the injectivity assumption.
\end{proof}

So we may assume that $n$ is even.
Then $H^1(F,C)$ is isomorphic to a factor group of the group of pairs $(a,z)\in F^\times \times K^\times$ such that $N_{K/F}(z)=a^n$
and $H^2(F,C)$ is isomorphic to a subgroup of $\Br(F)\oplus\Br(K)$
of all pairs $(v,u)$ such that $v_K=mu$ and $N_{K/F}(u)=0$, see \cite[pp.~795, 796]{MPT02}.

Suppose that $B$ is split; we follow the argument in \cite[31.44]{Book}.  Then $\gSU(B,\tau)=\gSU(h)$, where $h$ is a hermitian form of trivial discriminant
on a vector $K$-space of dimension $n$ for the quadratic extension $K/F$. Let
$q(v):=h(v,v)$ be the associated quadratic form on $V$ viewed as a $2n$-dimensional $F$-space. The quadratic form $q$
is nondegenerate, and we can view $\gSU(h)$ as a subgroup of $H:=\gSpin(V,q)$. The Dynkin index of $G$ in $H$ is $1$, hence the
the composition $H^1(K,G)\to H^1(K,H)\xra{r_H} H^3(K)$ equals the Rost invariant of $G$. Then $r_H$ is given by the Arason invariant.
A computation shows that the image of the pair $(a,z)$ representing an element $x\in H^1(F,C)$ under the composition
\[
H^1(F,C)\to H^1(F,G)\xra{r_G} H^3(F)
\]
coincides with $[D]\cup x$, where $D$ is the class of the discriminant algebra of $h$. On the other hand, $[D]\cup x$
coincides with the cup product of $x$ with the Tits class $t_G=-t_G^\circ$ represented by the pair $([D],0)$
 in $H^2(F,C^\circ)$, proving Theorem \ref{main} in this case.

Now drop the assumption that $B$ is split.  As for the $n$ odd case, the theorem holds over $K$.
Note that there is an injective map $H^2(F,C)\to  \Br(F)\oplus\Br(K)$. Let $X=R_{K/F}(SB(B))$. By \cite[2.4.6]{MT95},
the map $\Br(F)\to \Br(F(X))$ is injective,
hence the natural homomorphism
\[
H^2(F,C) \to H^2(F,C_{F(X)}) \oplus H^2(F,C_K)
\]
is injective. The theorem holds over $K$ and by the preceding paragraph the theorem holds over $F(X)$.  Therefore, by Lemma \ref{fe}, the theorem holds over $F$.

%%%%%%%%%%%%%%%%%%%%%%%%%%%%%%%%%%%%%%%%%%%%%%%%%%%%%%%%%%%%%%%%%%%%%
\section{Conclusion of the proof of Theorem \ref{main}}

Choose a system of simple roots $\Pi$ of $G$. Write $\Pi_r$ for the subset of $\Pi$ consisting of all simple roots
whose fundamental weight belongs to $\Lambda_r$ and let $\Pi':=\Pi\setminus \Pi_r$.
Inspection of the Dynkin diagram shows that all connected component of $\Pi'$ have type $\cat{A}$.

Every element of $\Pi_r$ is fixed by every
automorphism of the Dynkin diagram, hence is fixed by the $*$-action of the absolute Galois group of $F$ on $\Pi$. It follows that the
variety $X$ of parabolic subgroups of $G_\sep$ of type $\Pi'$ is defined over $F$.
By \cite{MT95}, the kernel of the restriction map $\Br(K)\to \Br(K(X))$ for every field extension $K/F$ is generated by the Tits algebras associated
with the classes in $C^*$ of the fundamental weights $f_\alpha$ corresponding to the simple roots $\alpha \in \Pi_r$. But $f_\alpha \in \Lambda_r$, so these Tits algebras are split \cite{Tits71}, hence
the restriction map $\Br(K)\to \Br(K(X))$ is injective and, by Lemma \ref{inj}, the natural homomorphism $H^2(F,C)\to H^2(F(X),C)$ is injective.
In view of Lemma \ref{fe}, it suffices to prove Theorem \ref{main} over the field $F(X)$, i.e., we
may assume that $G$ has a parabolic subgroup of type $\Pi'$. As shown in \cite{GQ07},
there is a simply connected subgroup $G'\subset G$ with Dynkin diagram $\Pi'$.
We write $R'\subset R$ for the root system of $G'$.

The center $C'$ of $G'$ contains $C$. Write $j$ for the embedding homomorphism $C\to C'$ and $j^\circ$ for the dual $C'^{\circ}\to C^\circ$.

Let $G'=\prod_i G'_i$ with $G_i$ simply connected simple groups, $C=\prod C_i$, where $C_i$ is the center of $G_i$, $\Pi'_i\subset \Pi$
the system of simple roots of $G_i$. Write $j_i^\circ$ for the composition $C_i'^{\circ}\to C'^{\circ}\to C^\circ$.

\begin{lemma}\label{11}
The map $j_i^*:H^2(F, C) \to H^2(F, C'_i)$ takes the Tits class $t_G$ to $t_{G'_i}$.
\end{lemma}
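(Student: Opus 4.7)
The idea is to realize $t_G$ and $t_{G'_i}$ simultaneously from a single cocycle and then appeal to naturality of the connecting homomorphism. Let $G^q$ be the quasi-split inner form of $G$, so $G = {}^\eta G^q$ for some cocycle $\eta \in Z^1(F, G^q_{ad})$. Since $G$ has an $F$-parabolic of type $\Pi'$ (arranged by base changing to $F(X)$), we may choose $\eta$ to have values in $P^q/C$, where $P^q$ is the standard parabolic of $G^q$ of type $\Pi'$; equivalently, $\xi_G \in H^1(F, G^q_{ad})$ lifts to a class in $H^1(F, P^q/C)$.

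The Levi $L^q$ of $P^q$ has an almost direct product decomposition $L^q = T_c \cdot \prod_j (G'^q)_j$, where $T_c$ is a central torus and $(G'^q)_i$ is the quasi-split inner form of $G'_i$. The composition $p_i \colon P^q \twoheadrightarrow L^q \twoheadrightarrow (G'^q)_i$ (killing $T_c$ and the $(G'^q)_j$ for $j \neq i$) restricts on $C$ to $j_i$, because $C \subseteq Z(L^q) = T_c \cdot \prod_j C'_j$ and the projection onto the $i$-th factor is exactly $j_i \colon C \to C'_i$. Pushing $\eta$ forward via $p_i$ produces a cocycle $\eta_i := (p_i)_*\eta$ in $(G'^q)_{i,ad}$ whose associated inner twist is precisely $G'_i$, by compatibility of inner twisting with the Levi decomposition and the construction of $G' \subseteq G$ given in \cite{GQ07}.

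The triple $(j_i, p_i, \bar p_i)$ furnishes a morphism of short exact sequences
\[
\xymatrix@C=.6cm{
1 \ar[r] & C \ar[r] \ar[d]^{j_i} & P^q \ar[r] \ar[d]^{p_i} & P^q/C \ar[r] \ar[d]^{\bar p_i} & 1 \\
1 \ar[r] & C'_i \ar[r] & (G'^q)_i \ar[r] & (G'^q)_{i,ad} \ar[r] & 1
}
\]
and naturality of the connecting homomorphism yields the commutative square $j_i^* \circ \partial = \partial' \circ \bar p_i^*$ on cohomology. Since $\bar p_i^*$ sends the lift of $\xi_G$ to $\xi_{G'_i}$ by the previous paragraph, applying the square gives $j_i^*(t_G) = -j_i^*\partial(\xi_G) = -\partial'(\xi_{G'_i}) = t_{G'_i}$.

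The principal obstacle is verifying the compatibility claim $\bar p_i^*(\xi_G) = \xi_{G'_i}$, i.e.\ that the cocycle $\eta_i$ obtained by projecting $\eta$ to $(G'^q)_{i,ad}$ actually defines $G'_i$ as an inner twist of $(G'^q)_i$. This rests on the explicit description of the simply connected subgroup $G' \subseteq G$ from \cite{GQ07} as (essentially) the simply connected cover of the derived group of the Levi of $P := {}^\eta P^q$, together with the standard compatibility between inner twisting and Levi decompositions of parabolics.
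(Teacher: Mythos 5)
Your overall strategy (produce one cocycle that computes both Tits classes and then invoke naturality of connecting homomorphisms) is the same as the paper's Proof \#1, but your implementation has a genuine gap: the homomorphism $p_i\colon P^q\twoheadrightarrow L^q\twoheadrightarrow (G'^q)_i$ ``killing $T_c$ and the $(G'^q)_j$ for $j\neq i$'' does not exist in general. The Levi $L^q$ is only an \emph{almost} direct product $T_c\cdot\prod_j (G'^q)_j$: the factor $(G'^q)_i$ meets $T_c\cdot\prod_{j\neq i}(G'^q)_j$ in a nontrivial finite central subgroup in general, so the quotient of $L^q$ by that subgroup is a proper central quotient of $(G'^q)_i$, not $(G'^q)_i$ itself. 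Concretely, for $G$ of type $\cat{C}_3$ (a case where the theorem has real content) one has $\Pi'=\{\alpha_1,\alpha_3\}$ and $L^q\simeq \GL_2\times\SL_2$; any homomorphism $L^q\to\SL_2$ killing $T_c$ and the second factor would give a nontrivial homomorphism $\PGL_2\to\SL_2$, which does not exist, so $p_1$ is not available (only the adjoint-level map $L^q\to L^q/Z(L^q)=\prod_j (G'^q)_{j,\ad}$ is). Hence the middle vertical arrow of your morphism of short exact sequences is undefined, and with it the identity $j_i^*\circ\partial=\partial'\circ\bar p_i^*$ into $H^2(F,C'_i)$ that your argument hinges on; having $\bar p_i$ only at the adjoint level is not enough for that compatibility. (Secondarily, you conflate the paper's $\xi_G\in H^1(F,G/C)$ with the class of $\eta\in H^1(F,G^q_{\ad})$; with the paper's convention $t_G=-\partial(\xi_G)$ one has $t_G=+\partial_{G^q}([\eta])$, so the signs in your last display need care, though this would wash out if treated consistently on both sides.)

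The paper avoids exactly this obstruction by staying inside the twisted group $G$: by the Witt-type argument the class making $G$ quasi-split is represented by a cocycle $\gamma$ with values in $G'/C$ (using the split torus $S$ with $C_G(S)=S\cdot G'$), and then the ``middle maps'' needed for naturality are just the inclusion $G'\hookrightarrow G$ and the identity of $G'$, which restrict to $C\hookrightarrow C'$; this gives $j^*(t_G)=t_{G'}$ with the \emph{whole} center $C'$, and only afterwards does one project to the $i$-th component, which is harmless because $G'=\prod_j G'_j$ is an honest direct product, so $H^2(F,C')\to H^2(F,C'_i)$ sends $t_{G'}$ to $t_{G'_i}$. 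If you want to salvage your direction of twisting, you should likewise compare with all of $C'$ at once (or note that the paper's Proof \#2 bypasses cocycles entirely: both $\chi\vert_{C'}(t_{G'})$ and $\chi\vert_C(t_G)$ are Tits algebra classes, and the characters of $C'$ jointly detect $H^2(F,C')$).
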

\begin{proof}[Proof \#1]
It suffices to check that $j^*(t_G) = t_{G'}$, for the projection $H^2(F, C') \to H^2(F, C'_i)$ sends $t_{G'} \mapsto t_{G'_i}$.

There is a rank $|\Pi_r|$ split torus $S$ in $G$ whose centralizer is $S \cdot G'$.
Arguing as in Tits's Witt-type Theorem \cite[2.7.1, 2.7.2(d)]{Tits66}, one sees that the quasi-split inner form of $G$ is obtained by twisting $G$ by a 1-cocycle $\gamma$ with values in $C_G(S)/C$, equivalently, in $G'/C$.  Clearly, twisting $G'$ by $\gamma$ gives the quasi-split inner form of $G'$.  The Tits class $t_G$ is defined to be $-\partial_G(\gamma)$ where $\partial_G$ is the connecting homomorphism $H^1(F, G/C) \to H^2(F, C)$ induced by the exact sequence $1 \to C \to G \to G/C \to 1$ and similarly for $G'$ and $C'$.  The diagram
\[
\xymatrix{
H^1(F, G'/C) \ar[r] \ar[d] & H^1(F, G/C) \ar[r]^{\partial_G} & H^2(F, C) \ar[d]^{j^*} \\
H^1(F, G'/C') \ar[rr]^{\partial_{G'}} && H^2(F, C')
}
\]
commutes trivially, so $j^*(t_G) = j^*(-\partial_G(\gamma)) = -\partial_{G'}(\gamma) = t_{G'}$ as claimed.
\end{proof}

\begin{proof}[Proof \#2]
For each $\chi \in T^*$, define $F(\chi)$ to be the subfield of $F_\sep$ of elements fixed by the stabilizer of $\chi$ under the Galois action.  Note that because $G$ is absolutely almost simple, the $*$-action fixes $\Pi_r$ elementwise, $F(\chi)$ equals the field extension $F(\chi\vert_{T'})$ defined analogously for $\chi \in (T')^*$.  The diagram
\[
\xymatrix{
H^2(F, C) \ar[r]^{j^*} \ar[rd]^{\chi\vert_C} & H^2(F, C') \ar[d]^{\chi\vert_{C'}} \\
&H^2(F(\chi), \gm)
}
\]
commutes.  Now $\chi\vert_{C'}(t_{G'} - j^*(t_G)) = \chi\vert_{C'}(t_{G'}) - \chi\vert_C(t_G)$, which is zero for all $\chi \in T^*$ by \cite[\S5.5]{Tits71}.  As $\prod_{\chi \in (T')^*} \chi\vert_{C'}$ is injective by \cite[Prop.~7]{G:outer}, $t_{G'} = j^*(t_G)$ as claimed.
\end{proof}

The diagram $\Pi'_i$ is simply laced. Write $d_i$ for the square-length of $\alpha^\vee \in R^\vee$ for $\alpha\in\Pi'_i$.

\begin{lemma}\label{2}
The homomorphism $\hat\rho_G:C\to C^\circ$ coincides with the composition
\[
C \xra{j} C' \xra{\hat\rho_{G'}} C'^{\circ} = \prod_i C_i'^{\circ} \xra{\prod_i ({{j}_i}^\circ)^{d_i}} C^\circ,
\]
where $j_i$ is the composition $C\to C'\to C'_i$.
\end{lemma}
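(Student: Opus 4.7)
My plan is to reduce the claimed identity $\hat\rho_G = (\prod_i (j_i^\circ)^{d_i}) \circ \hat\rho_{G'} \circ j$ to the equivalent statement on character modules, and then to evaluate both sides on the $\Z$-basis of $\Lambda_w^\vee$ given by the fundamental coweights $\{f_\alpha^\vee\}_{\alpha \in \Pi}$.

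Passing to character modules, the claim becomes the identity $\rho_G = \sum_i d_i \tau_i$ of maps $C_* \to C^*$, where $\tau_i$ denotes the composition $C_* \to (C'_i)_* \xra{\rho_{G'_i}} (C'_i)^* \to C^*$; here the first arrow is the character-module map induced by $j_i^\circ$, and the last is the character-module map induced by $j_i$.  Using Example \ref{phi.f} together with the fact that $R'_i$ is simply laced, $\rho_G$ is induced by $f_\alpha^\vee \mapsto d_\alpha f_\alpha$ while $\rho_{G'_i}$ is induced by $f_\alpha^{\vee,R'_i} \mapsto f_\alpha^{R'_i}$ for $\alpha \in \Pi'_i$.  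Identifying $V \simeq V^*$ via the $\Aut(R)$-invariant inner product, the first arrow of $\tau_i$ is induced by the orthogonal projection $\pi_i \colon \Lambda_w^\vee \to \Lambda_w^\vee(R'_i)$, which satisfies $\pi_i(f_\alpha^\vee) = f_\alpha^{\vee,R'_i}$ for $\alpha \in \Pi'_i$ and $\pi_i(f_\alpha^\vee) = 0$ otherwise (from $\langle \beta, \pi_i(f_\alpha^\vee)\rangle = \langle \beta, f_\alpha^\vee\rangle = \delta_{\alpha\beta}$ for $\beta \in \Pi'_i$); the last arrow is induced by the lattice map $f_\alpha^{R'_i} \mapsto f_\alpha$ for $\alpha \in \Pi'_i$.

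The substantive step---and what I expect to be the main obstacle---is justifying this last lattice-level description: that the character-module map $(C'_i)^* \to C^*$ induced by $j_i$ sends $f_\alpha^{R'_i} \bmod \Lambda_r(R'_i)$ to $f_\alpha \bmod \Lambda_r$.  The argument is a restriction-of-characters calculation.  For $\alpha \in \Pi'_i$, the character $f_\alpha$ of $T$ restricts to a character of $T' = \prod_k T'_k$, which factors as a product of characters on the individual $T'_k$, each corresponding to the orthogonal projection of $f_\alpha$ onto $V'_k$; the same short computation used for $\pi_i$ shows this projection is $f_\alpha^{R'_i}$ when $k = i$ and is zero when $k \ne i$.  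Consequently, for $c \in C \subset T'$ with components $(c_1, \ldots, c_s) \in \prod_k T'_k$, one has $f_\alpha(c) = f_\alpha^{R'_i}(c_i) = f_\alpha^{R'_i}(j_i(c))$, which is the claimed identity in $C^*$.

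With these descriptions, verification on the basis is immediate.  For $\alpha \in \Pi'_i$, the left-hand side is $d_\alpha f_\alpha = d_i f_\alpha \bmod \Lambda_r$; on the right, only the summand with this $i$ contributes (since $\pi_k(f_\alpha^\vee) = 0$ for $k \ne i$), yielding $d_i \cdot f_\alpha \bmod \Lambda_r$.  For $\alpha \in \Pi_r$, the left-hand side is $d_\alpha f_\alpha$, which lies in $\Lambda_r$ by the very definition of $\Pi_r$ and hence vanishes in $C^*$; the right-hand side vanishes because $\pi_k(f_\alpha^\vee) = 0$ for every $k$ (as $\alpha \notin \Pi'_k$ for any $k$).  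Since the fundamental coweights form a $\Z$-basis of $\Lambda_w^\vee$, the identity holds.
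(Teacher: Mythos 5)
Your reduction to a lattice identity and your description of the character map $j_i^*\colon (C'_i)^*\to C^*$ (namely $f_\alpha^{R'_i}\bmod \Lambda_r(R'_i)\mapsto f_\alpha\bmod\Lambda_r$, proved by restricting $f_\alpha$ to $T'=\prod_k T'_k$) are correct and agree with the paper. The gap is the central claim that the dual map $C_*\to (C'_i)_*$ is induced by the \emph{orthogonal} projection $\pi_i\colon\Lambda_w^\vee\to\Lambda_w^\vee(R'_i)$. This is false: $\pi_i$ does not carry $\Lambda_r^\vee$ into $\Lambda_r^\vee(R'_i)$ whenever some root of $\Pi_r$ is adjacent to $\Pi'_i$, so it does not even induce a homomorphism of the finite groups, and the values it assigns to classes are wrong. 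Concretely, for $R=\cat{B}_3$ (so $G=\Spin_7$, $\Pi_r=\{\alpha_1,\alpha_2\}$, $\Pi'=\{\alpha_3\}$ of type $\cat{A}_1$) one has $\qform{\alpha_3,\alpha_2^\vee}=-1$, hence $\pi_1(\alpha_2^\vee)=-\tfrac12\alpha_3^\vee\notin\Lambda_r^\vee(R'_1)$; moreover the nontrivial class of $C_*\simeq\Z/2$ is represented by $f_1^\vee$, and since $j_1\colon C\to C'_1$ is an isomorphism $\mu_2\to\mu_2$ (because $j_1^*$ sends the nontrivial character to the class of $f_3\notin\Lambda_r$), the true dual map sends this class to the nontrivial class of $(C'_1)_*$, whereas $\pi_1(f_1^\vee)=0$. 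The correct dual map is induced by $x\mapsto x-\sum_{\gamma\in\Pi_r}\qform{x,f_\gamma}\gamma^\vee$ (integral precisely because $f_\gamma\in\Lambda_r$ for $\gamma\in\Pi_r$), i.e., projection \emph{along the coroots indexed by $\Pi_r$}; this is the paper's map $s$, and it differs from $\pi_i$ by terms $\sum_\gamma\qform{x,f_\gamma}\pi_i(\gamma^\vee)$ that are in general only half-integral multiples of coroots of $R'_i$.

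Because of this, your ``immediate'' basis verification computes with wrong intermediate values. For $\alpha\in\Pi_r$ the right-hand side does vanish in $C^*$, but not for the reason you give ($\pi_k(f_\alpha^\vee)=0$): in the $\cat{B}_3$ example the image of $f_1^\vee$ in $(C'_1)_*$ is nonzero, and the vanishing only happens after applying $\rho_{G'_1}$, lifting, and multiplying by $d_1=2$, because $2f_3\in\Lambda_r$. In general the discrepancies are supported on $\spp(\Phi^\vee)+\Lambda_r^\vee$ with $\Phi^\vee=\{f_\gamma^\vee:\gamma\in\Pi_r\}$, and one must check that $\rho$ maps this into $\Lambda_r$ (using $\rho(f_\gamma^\vee)=d_\gamma f_\gamma\in\Lambda_r$ and $\rho(\Lambda_r^\vee)\subseteq\Lambda_r$). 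Controlling exactly these correction terms is the substance of the paper's proof (the claim $(t^\vee\circ s)(x)-x\in\spp(\Phi^\vee)+\Lambda_r^\vee$ together with $\rho\bigl(\spp(\Phi^\vee)+\Lambda_r^\vee\bigr)\subseteq\Lambda_r$); your proposal assumes this difficulty away at the step identifying the first arrow of $\tau_i$ with $\pi_i$, so as written it is not a proof, though it could be repaired by replacing $\pi_i$ with the components of $s$ and then carrying out essentially the paper's error-term analysis.
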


\begin{proof}
For every simple root $\alpha\in\Pi$ write $f_\alpha$ for the corresponding fundamental weight.
Write $\Lambda'_r$ and $\Lambda'_w$ for the root and weight lattices respectively of the root system $R'$. Let
\[
\Phi:=\{f_\alpha,\alpha\in\Pi_r\}.
\]
Then $\Phi$ is a $\Z$-basis for the kernel  of the natural surjection $\Lambda_w\to \Lambda'_w$.
If $\alpha\in\Pi'$, we write $\alpha'$ for the image of $\alpha$ and $f'_\alpha$ for the image of $f_\alpha$
under this surjection. All $\alpha'$ (respectively, $f'_\alpha$) form the system of simple roots
(respectively, fundamental weights) of $R'$.
If $\alpha\in\Pi'$, the image ${\alpha'}^\vee$ of ${\alpha}^\vee$ under the inclusion ${\Lambda'_r}^\vee\hookrightarrow {\Lambda_r}^\vee$
is a simple coroot of $R'$.

If $V$ is the real vector space of $R$, then $R'\subset V':=V/\spp(\Phi)$ and ${R'}^\vee\subset {V'}^*\subset V^*$.
Let $x\in\Lambda_w^\vee$, i.e., $\langle x,\alpha\rangle\in\Z$ for all $\alpha\in\Pi$. Since $\Phi\subset\Lambda_r$, we have
$a_\alpha:=\langle x,f_\alpha\rangle\in\Z$ for all $\alpha\in \Pi_r$. Then the linear form $x':=x-\sum_{\alpha\in\Pi_r}a_\alpha\alpha^\vee$
vanishes on the subspace of $V$ spanned by $\Phi$, hence $x'\in {\Lambda'_w}^\vee$. We then have a well defined homomorphism
\begin{equation}\label{form}
s: \Lambda_w^\vee \to {\Lambda'_w}^\vee, \quad x\mapsto x'.
\end{equation}
If $\alpha\in\Pi'$, then $\langle x',\alpha\rangle=\langle x',\alpha'\rangle$. It follows that if
$x'=\sum_{\alpha\in\Pi}b_\alpha f_\alpha^\vee$ in $\Lambda_w^\vee$ for $b_\alpha=\langle x',\alpha\rangle\in\Z$, then
$x'=\sum_{\alpha\in\Pi'}b_\alpha {f'_\alpha}^\vee$ in ${\Lambda'_w}^\vee$.

Since $\Phi\subset\Lambda_r$, we have a surjective homomorphism
\[
{C'}^*=\Lambda'_w/\Lambda'_r=\Lambda_w/\spp(\Phi,\Pi')\to\Lambda_w/\Lambda_r=C^*
\]
dual to the inclusion of $C$ into $C'$. The dual homomorphism
\[
C_*=\Lambda_w^\vee/\Lambda_r^\vee \to {\Lambda'_w}^\vee/{\Lambda'_r}^\vee=C'_*
\]
is induced by $s$.

Consider the diagram
\[
\xymatrix{
 \Lambda_w^\vee\ar[d]_{s}\ar[r]^{\varphi} & \Lambda_w  \\
 {\Lambda'_w}^{\!\!\vee} \ar[r]^{\varphi'} & \Lambda'_w \ar[u]_{t},
 }
\]
where the map $t$ is defined by $t(f'_\alpha)=d_\alpha f_\alpha$ for all $\alpha\in\Pi'$ and the maps $\varphi$
and $\varphi'$ are defined in Proposition \ref{phi}.

It suffices to prove that $\Im(t\circ\varphi'\circ s - \varphi)\subset \Lambda_r$.

Consider the other diagram
\[
\xymatrix{
 \Lambda_w^\vee\ar[r]^{\rho} & \Lambda_w  \\
 {\Lambda'_w}^{\!\!\vee} \ar[u]^{t^\vee}\ar[r]^{\rho'} & \Lambda'_w \ar[u]_{t},
 }
\]
where $t^\vee({f'_\alpha}^\vee)=f_\alpha^\vee$ for all $\alpha\in\Pi'$. This diagram is commutative. Indeed, 
\[
(\rho\circ t^\vee)({f'_\alpha}^\vee)=\rho(f_\alpha^\vee)=d_\alpha f_\alpha=t(f'_\alpha)=(t\circ \rho')({f'_\alpha}^\vee),
\]
where the second equality is by Example \ref{phi.f}.
(Recall that the root system $R'$ is simply laced, hence $\rho'({f'_\alpha}^\vee)=f'_\alpha$.)

We claim that
\[
(t^\vee\circ s)(x)-x\in \spp(\Phi^\vee)+\Lambda_r^\vee
\]
for every $x\in \Lambda_w^\vee$, where $\Phi^\vee:=\{f_\alpha^\vee,\alpha\in\Pi_r\}$. Indeed, in the notation of \eqref{form} we have
\begin{align*}
(t^\vee\circ s)(x)-x&= t^\vee(x')-x= t^\vee(x')-x'-\sum_{\alpha\in\Pi_r}a_\alpha\alpha^\vee \\
&=t^\vee\(\sum_{\alpha\in\Pi'}b_\alpha {f'_\alpha}^\vee\)-\sum_{\alpha\in\Pi}b_\alpha f_\alpha^\vee
-\sum_{\alpha\in\Pi_r}a_\alpha\alpha^\vee\\
&=-\sum_{\alpha\in\Pi_r}b_\alpha f_\alpha^\vee-\sum_{\alpha\in\Pi_r}a_\alpha\alpha^\vee\in \spp(\Phi^\vee)+\Lambda_r^\vee.
\end{align*}
It follows from the claim that
\[
(t\circ \rho'\circ s)(x)-\rho(x)=(\rho\circ t^\vee \circ s)(x)-\rho(x)=\rho\((t^\vee\circ s)(x)-x\)\in\rho\(\spp(\Phi^\vee)+\Lambda_r^\vee\).
\]
As $\rho(f_\alpha^\vee)=d_\alpha f_\alpha\in\Lambda_r$ for all $f_\alpha\in\Phi$, this is contained in $\Lambda_r$, proving the claim.
\end{proof}

Lemmas \ref{11} and \ref{2} yield:
\begin{corollary}\label{comp}
$t_G^\circ=\sum_i d_i\cdot j_i^{\circ *}(t_{G_i'}^\circ)$.
\end{corollary}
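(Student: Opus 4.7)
The proof should be essentially a direct concatenation of Lemmas \ref{11} and \ref{2}, with the only subtlety being careful bookkeeping of the induced maps on $H^2$. Here is the plan.

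First, I would unpack the definition: $t_G^\circ = \hat\rho_G^*(t_G) \in H^2(F, C^\circ)$. The goal is to evaluate the right-hand side using Lemma \ref{2}, which writes $\hat\rho_G$ as the composition
\[
C \xra{j} C' = \prod_i C'_i \xra{\hat\rho_{G'}} \prod_i {C'_i}^\circ \xra{\sum_i d_i \cdot j_i^\circ} C^\circ,
\]
where I am using that $\hat\rho_{G'} = \prod_i \hat\rho_{G'_i}$ componentwise (this follows because the root system of $G'$ splits as a disjoint union of the root systems of the $G'_i$, and the construction of $\rho$ in \S\ref{rho.def} is compatible with such decompositions). Functoriality of $H^2(F, -)$ in the coefficient group then gives
\[
\hat\rho_G^*(t_G) = \sum_i d_i \cdot j_i^{\circ *}\bigl(\hat\rho_{G'_i}^*(j_i^*(t_G))\bigr).
\]

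Next, I would invoke Lemma \ref{11} to replace $j_i^*(t_G)$ by $t_{G'_i} \in H^2(F, C'_i)$. By definition of $t_{G'_i}^\circ$ (applied to the simple group $G'_i$ in place of $G$), we have $\hat\rho_{G'_i}^*(t_{G'_i}) = t_{G'_i}^\circ$. Substituting back yields
\[
t_G^\circ = \sum_i d_i \cdot j_i^{\circ *}(t_{G'_i}^\circ),
\]
as required.

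I do not expect any real obstacle here; the work has all been done in the preceding two lemmas, and the argument is a straightforward chase of induced maps. The only place requiring slight care is verifying that the decomposition of $\hat\rho_{G'}$ along the product $C' = \prod_i C'_i$ is compatible with the componentwise description given in Lemma \ref{2}, but this is immediate from the fact that $\rho$ was defined componentwise on the irreducible factors of the root system.
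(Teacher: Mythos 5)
Your argument is correct and is exactly how the paper obtains the corollary: it is stated as an immediate consequence of Lemmas \ref{11} and \ref{2}, with no further proof given, and your write-up simply spells out the functoriality bookkeeping (including the harmless observation that $\hat\rho_{G'}$ decomposes componentwise) that the paper leaves implicit.
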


\begin{lemma}\label{3}
The diagram
\[
\xymatrix{
 H^1(F, G')\ar[d]\ar@{=}[r] & \prod_i H^1(F, G_i') \ar[d]^{\sum d_i\cdot r_{G'_i}}\\
 H^1(F, G) \ar[r]^-{r_{G}} & H^{3}(F,\Q/\Z(2))
 }
\]
commutes.
\end{lemma}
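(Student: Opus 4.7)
The plan is to combine the standard functoriality of the Rost invariant under embeddings of simple simply connected groups with an explicit computation of Dynkin indices.

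First, by the definition of the Rost invariant of a semisimple simply connected group in \S6.4, the decomposition $H^1(F,G')=\prod_i H^1(F,G'_i)$ identifies $r_{G'}$ with $\sum_i r_{G'_i}$ (no corestrictions intervene, since each $G'_i$ is defined over $F$). It therefore suffices to show that, for each $i$, the composition
\[
H^1(F,G'_i) \to H^1(F,G) \xra{r_G} H^3(F,\Q/\Z(2))
\]
equals $d_i\cdot r_{G'_i}$.

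Second, I would invoke the functoriality of the Rost invariant under a homomorphism $\phi\!:H\to G$ of absolutely simple, simply connected groups: $r_G\circ\phi^* = n_\phi\cdot r_H$, where $n_\phi$ is the Dynkin index of $\phi$. This is proved in \cite[\S7]{GMS}; the sign adjustment from \S\ref{charp.sign} preserves this relation, since the $p$-primary sign flip is applied uniformly to both $r_G$ and $r_H$ and $n_\phi$ enters as an integer multiplier. The lemma thus reduces to proving that the Dynkin index $n_i$ of the inclusion $G'_i\hookrightarrow G$ equals $d_i$.

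Third, compute the Dynkin index. Let $q_G$ and $q_{G'_i}$ be the Weyl-invariant, positive, $\Z$-valued quadratic forms on the coroot lattices $\Lambda_r^\vee(G)$ and $\Lambda_r^\vee(G'_i)$ respectively, each normalized to take value $1$ on short coroots (as in the remark following Proposition \ref{phi}). By the definition of the Dynkin index, $n_i$ is the positive integer such that $q_G$ restricted to $\Lambda_r^\vee(G'_i)$ equals $n_i\cdot q_{G'_i}$. Since $R'_i$ is of type $\cat{A}$ and hence simply laced, all its coroots are short, so $q_{G'_i}(\alpha^\vee)=1$ for every $\alpha\in\Pi'_i$; meanwhile, by the definition of $d_i$ stated just before the lemma, $q_G(\alpha^\vee)=(\alpha^\vee,\alpha^\vee)^\vee=d_\alpha=d_i$ for the same $\alpha$. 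Hence $n_i=d_i$.

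The only delicate point is matching conventions: one must verify that the normalization of the invariant quadratic form used in the Dynkin-index characterization agrees with the normalization of $(\cdot,\cdot)^\vee$ used in Section \ref{root}. Both are uniquely characterized by demanding value $1$ on short coroots in each irreducible component, so they coincide and the calculation above is unambiguous.
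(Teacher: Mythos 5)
Your proof is correct and follows essentially the same route as the paper: the paper also invokes the \cite{GMS} functoriality statement (the Rost multiplier, i.e.\ the order of the cokernel of $Q(G)\to Q(G'_i)$, which is your Dynkin index $n_i$) and computes it to be $d_i$ using that $R'_i$ is simply laced while its coroots have square-length $d_i$ inside $R^\vee$. The only cosmetic caveat is that the $G'_i$ need not be absolutely simple (they may be Weil restrictions $R_{L_i/F}(H_i)$), so one should cite the multiplier statement in the form valid for simple, not necessarily absolutely simple, simply connected groups --- exactly as the paper does via $Q(G)\to Q(G'_i)$.
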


\begin{proof}
The composition $H^1(F, G'_i)\to  H^1(F, G) \xra{r_G} H^{3}(F,\Q/\Z(2))$  coincides with the $k^{th}$ multiple of the Rost invariant
$r_{G'_i}$, where $k$ is the order of the cokernel of the map $Q(G)\to Q(G'_i)$ of infinite cyclic groups generated by
positive definite quadratic forms $q_G$ and $q_{G'_i}$ on the lattices of coroots normalized so that the forms have value $1$ on the short coroots (see \cite{GMS}).
Recall that all coroots of $G'_i$ have the same length, hence $q_{G'_i}$ has value $1$ on all coroots of $G'_i$. Therefore, $k$ coincides with
$d_i$, the square-length of all coroots of $G'_i$ viewed as coroots of $G$.
\end{proof}

Write each $G'_i = R_{L_i/F}(H_i)$ for $L_i$ a separable field
 extension of $F$ and $H_i$ a simply connected absolutely simple algebraic group of type $\cat{A}$ over $L_i$.
Theorem \ref{main} is proved for such groups in \S\ref{typa}.
By Lemma \ref{reduction}, Theorem \ref{main} holds for the group $G_i'$ and hence for $G'$.

Let $x\in H^1(F, C)$ and let $y \in H^1(F, G)$, $\prod x'_i \in H^1(F, C') = \prod H^1(F, C'_i)$ and $\prod y'_i \in \prod H^1(F, G'_i)$ denote its images under the natural maps.
We find
\begin{align*}
r_G(y) & = \sum_i d_i\cdot r_{G'_i}(y_i) \quad\text{by Lemma \ref{3}}  \\
& = \sum_i d_i\cdot (-t^\circ_{G'_i}\cup x'_i)  \quad\text{by the main theorem for all $G'_i$} \\
& = \sum_i d_i\cdot j_i^{\circ*}(-t_{G'_i}^\circ)\cup x  \quad\text{by Lemma \ref{1}}  \\
& = -t_{G}^\circ\cup x  \quad\text{by Corollary \ref{comp}.}  
\end{align*}
This completes the proof of Theorem \ref{main}.

%%%%%%%%%%%%%%%%%%%%%%%%%%%%%%%%%%%%%%%%%%%%%%%%%%%%
\section{Concrete formulas}

In this section, we deduce the explicit formulas for the restriction of the Rost invariant to the center given in \cite{MPT02} and \cite{GQ07}.  Note that the explicit formulas given in those references required ad hoc definitions for the cup product, whereas here the formulas are deduced from Theorem \ref{main}.

\subsection{The pairing induced by $\rho$} \label{rho.subsec}
The map $\rho$ defines a bilinear pairing $\Delta^\vee \ot \Delta^\vee \to \Q/\Z$ via
\begin{equation} \label{rho.pair}
\Delta^\vee \ot \Delta^\vee \xrightarrow{\id \otimes \rho} \Delta^\vee \ot \Delta \to \Q/\Z.
\end{equation}
We now determine this pairing for each simple root system $R$.

For $R$ with different root lengths, $\rho$ is zero and hence \eqref{rho.pair} is zero unless $R = \cat{C}_n$ for odd $n \ge 3$.  In that case (and also for $R = \cat{E}_7$), $\Delta \simeq \Z/2 \simeq \Delta^\vee$ and $\rho$ is the unique isomorphism, so \eqref{rho.pair} amounts to the product map $x \ot y \mapsto xy$.
Therefore we may assume that $R$ has only one root length. 

If $\Delta^\vee$ is cyclic, we pick a fundamental dominant weight $f^\vee_i$ that generates $\Delta^\vee$ and the pairing \eqref{rho.pair} is determined by the image of $f_i^\vee \ot f_i^\vee$, which is the coefficient (in $\Q/\Z$) of the simple root $\alpha^\vee_i$ appearing in the expression for $f^\vee_i$ in terms of simple roots.  By hypothesis on $R$, the canonical isomorphism $R \simeq R^\vee$ is an isometry, and in particular this coefficient is the same as the coefficient for $\alpha_i$ in $f_i$, which can be looked up in \cite{Bou:g4}.

For $R = \cat{A}_n$, we have $\Delta^\vee \simeq \Z/(n+1)$ generated by $f^\vee_1$ and $f^\vee_1 \ot f^\vee_1 \mapsto n/(n+1)$, cf.~Example \ref{rho.minus}.

For $R = \cat{D}_n$ for odd $n > 4$, $\Delta^\vee \simeq \Z/4$ generated by $f^\vee_n$ and $f^\vee_n \ot f^\vee_n \mapsto n/4$.

For $R = \cat{E}_6$, we have $\Delta^\vee \simeq \Z/3$ generated by $f^\vee_1$ and $f^\vee_1 \ot f^\vee_1 \mapsto 1/3$.

For $R = \cat{D}_n$ for even $n \ge 4$, $\Delta^\vee$ is isomorphic to $\Z/2 \oplus \Z/2$ generated by $f^\vee_{n-1}$, $f^\vee_n$.  The tables show that $f^\vee_{n-1} \ot f^\vee_{n-1}$ and $f^\vee_n \ot f^\vee_n$ map to $n/4$ whereas $f^\vee_n \ot f^\vee_{n-1}$ and $f^\vee_{n-1} \ot f^\vee_n$ map to $(n-2)/4$.  That is, viewing \eqref{rho.pair} as a bilinear form on $\F_2 \oplus \F_2$, for $n \equiv 0 \bmod 4$ it is the wedge product (which is hyperbolic) and for $n \equiv 2 \bmod 4$ it is the unique (up to isomorphism) metabolic form that is not hyperbolic.

\subsection{The cup product on $C$} \label{C.cup}
Let $G$ be a simple simply connected algebraic group over $F$ with center $C$.
The pairing (\ref{rho.pair}) reads as follows:
\[
C_* \ot C_* \xrightarrow{\id \otimes \rho} C_* \ot C^* \to \Q/\Z.
\]
Twisting (tensoring with $\Z(1)\tiL \Z(1)$) we get a composition
\[
C_*(1)\stackrel{L}\tens C_*(1) \to C_*(1)\stackrel{L}\tens C^*(1) \to \Q/\Z(2),
\]
where the second map was already defined in (\ref{fp1}). Therefore, we have a pairing
\begin{equation}\label{newpair}
H^1(F,C)\tens H^2(F,C) \to H^1(F,C)\tens H^2(F,C^\circ) \to H^3(F,\Q/\Z(2)).
\end{equation}
This pairing takes $x\tens t_G$ to $x\cup t_G^\circ$ for every $x\in H^1(F,C)$. Thus, Theorem \ref{main} states that
the composition $r_G\circ i^*$ takes $x$ to $x\cup t_G$, where the cup-product is taken with respect to the pairing (\ref{newpair}) induced by (\ref{rho.pair}).  Combining this observation with the computation of \eqref{rho.pair} recovers the formulas given in \cite{MPT02} and \cite{GQ07}.

\def\cprime{$'$} \def\cprime{$'$}
\providecommand{\bysame}{\leavevmode\hbox to3em{\hrulefill}\thinspace}
\providecommand{\MR}{\relax\ifhmode\unskip\space\fi MR }
% \MRhref is called by the amsart/book/proc definition of \MR.
\providecommand{\MRhref}[2]{%
  \href{http://www.ams.org/mathscinet-getitem?mr=#1}{#2}
}
\providecommand{\href}[2]{#2}


\begin{thebibliography}{KMRT98}

\bibitem[Bou02]{Bou:g4}
N.~Bourbaki, \emph{Lie groups and {L}ie algebras: Chapters 4--6},
  Springer-Verlag, Berlin, 2002.

\bibitem[CGP15]{CGP2}
B.~Conrad, O.~Gabber, and G.~Prasad, \emph{Pseudo-reductive groups}, 2nd ed.,
  Cambridge University Press, 2015.

\bibitem[Gar01]{G:rinv}
S.~Garibaldi, \emph{The {R}ost invariant has trivial kernel for quasi-split
  groups of low rank}, Comm. Math. Helv. \textbf{76} (2001), no.~4, 684--711.

\bibitem[Gar12]{G:outer}
\bysame, \emph{Outer automorphisms of algebraic groups and determining groups
  by their maximal tori}, Michigan Math. J. \textbf{61} (2012), no.~2,
  227--237.

\bibitem[Gil00]{Gille00}
P.~Gille, \emph{Invariants cohomologiques de {R}ost en caract\'eristique
  positive}, $K$-Theory \textbf{21} (2000), 57--100.

\bibitem[GMS03]{GMS}
S.~Garibaldi, A.~Merkurjev, and J-P. Serre, \emph{Cohomological invariants in
  {G}alois cohomology}, American Mathematical Society, Providence, RI, 2003.

\bibitem[GQM07]{GQ07}
S.~Garibaldi and A.~Qu{\'e}guiner-Mathieu, \emph{Restricting the {R}ost
  invariant to the center}, Algebra i Analiz \textbf{19} (2007), no.~2, 52--73.
  \MR{2333896 (2008f:12009)}

\bibitem[GQM11]{GQ11}
Philippe Gille and Anne Qu{\'e}guiner-Mathieu, \emph{Formules pour l'invariant
  de {R}ost}, Algebra Number Theory \textbf{5} (2011), no.~1, 1--35.
  \MR{2833783}

\bibitem[GS06]{GS06}
Philippe Gille and Tam{\'a}s Szamuely, \emph{Central simple algebras and
  {G}alois cohomology}, Cambridge Studies in Advanced Mathematics, vol. 101,
  Cambridge University Press, Cambridge, 2006. \MR{2266528 (2007k:16033)}

\bibitem[Kah96]{Kahn96}
B.~Kahn, \emph{Applications of weight-two motivic cohomology}, Doc. Math.
  \textbf{1} (1996), No.\ 17, 395--416. \MR{98b:14007}

\bibitem[Kat82]{Kato82}
K.~Kato, \emph{Galois cohomology of complete discrete valuation fields},
  Algebraic $K$-theory, Part II (Oberwolfach, 1980), Springer, Berlin, 1982,
  pp.~215--238. \MR{84k:12006}

\bibitem[KMRT98]{Book}
M.-A. Knus, A.~Merkurjev, M.~Rost, and J.-P. Tignol, \emph{The book of
  involutions}, American Mathematical Society, Providence, RI, 1998, With a
  preface in French by J.\ Tits.

\bibitem[Lic87]{Lichtenbaum87}
S.~Lichtenbaum, \emph{The construction of weight-two arithmetic cohomology},
  Invent. Math. \textbf{88} (1987), no.~1, 183--215. \MR{88d:14011}

\bibitem[Lic90]{Lichtenbaum90}
\bysame, \emph{New results on weight-two motivic cohomology}, The Grothendieck
  Festschrift, Vol.\ III, Birkh{\"a}user Boston, Boston, MA, 1990, pp.~35--55.
  \MR{92m:14030}

\bibitem[Mer99]{Merkurjev99}
Alexander Merkurjev, \emph{Invariants of algebraic groups}, J. Reine Angew.
  Math. \textbf{508} (1999), 127--156. \MR{MR1676873 (2000g:20089)}

\bibitem[Mer16]{Merkurjev13}
A.~S. Merkurjev, \emph{Weight two motivic cohomology of classifying spaces for
  semisimple groups}, Amer. J. Math. \textbf{138} (2016), no.~3, 763--792.

\bibitem[Mil80]{Milne80}
J.~S. Milne, \emph{\'{E}tale cohomology}, Princeton University Press,
  Princeton, N.J., 1980. \MR{81j:14002}

\bibitem[MPT02]{MPT02}
A.~S. Merkurjev, R.~Parimala, and J.-P. Tignol, \emph{Invariants of
  quasitrivial tori and the {R}ost invariant}, Algebra i Analiz \textbf{14}
  (2002), no.~5, 110--151. \MR{MR1970336 (2004c:11045)}

\bibitem[MT95]{MT95}
A.~S. Merkurjev and J.-P. Tignol, \emph{The multipliers of similitudes and the
  {B}rauer group of homogeneous varieties}, J. Reine Angew. Math. \textbf{461}
  (1995), 13--47. \MR{MR1324207 (96c:20083)}

\bibitem[Ste63]{St:rep}
R.~Steinberg, \emph{Representations of algebraic groups}, Nagoya Math. J.
  \textbf{22} (1963), 33--56, [= Collected Papers, pp.~149--172].

\bibitem[Tit66]{Tits66}
J.~Tits, \emph{Classification of algebraic semisimple groups}, Algebraic Groups
  and Discontinuous Subgroups (Proc. Sympos. Pure Math., Boulder, Colo., 1965)
  (A.~Borel and G.~D. Mostow, eds.), vol.~9, Amer. Math. Soc., Providence,
  R.I., 1966, 1966, pp.~33--62.

\bibitem[Tit71]{Tits71}
\bysame, \emph{Repr\'esentations lin\'eaires irr\'eductibles d'un groupe
  r\'eductif sur un corps quelconque}, J. reine angew. Math. \textbf{247}
  (1971), 196--220.

\end{thebibliography}
\end{document}